\newtheorem{theorem}{Theorem}
\theoremstyle{plain}
\newtheorem{conjecture}{Conjecture}
\newtheorem{corollary}{Corollary}
\newtheorem{definition}{Definition}
\newtheorem{lemma}{Lemma}
\numberwithin{equation}{section}
\begin{document}
\title{Parity in Knotoids}

\author{Neslihan G{\"u}g{\"u}mc{\"u}}
\author{Louis H.Kauffman}

\address{Neslihan G{\"u}g{\"u}mc{\"u}}
\address{Louis H.Kauffman:Department of Mathematics, Statistics and Computer
Science, University of Illinois at Chicago, 851 South Morgan St., Chicago
IL 60607-7045, U.S.A. and
Department of Mechanics and Mathematics
Novosibirsk State University
Novosibirsk
Russia}
\email{nesli@central.ntua.gr} \email{kauffman@math.uic.edu}
\maketitle

\begin{abstract}
This paper investigates the parity concept in knotoids in $S^2$ and in $\mathbb{R}^2$ in relation with virtual knots. We show that the virtual closure map is not surjective and give specific examples of virtual knots that are not in the image. We introduce a planar version of the parity bracket polynomial for knotoids in $\mathbb{R}^2$. By using the Nikonov/Manturov theorem on minimal diagrams of virtual knots we prove a conjecture of Turaev showing that minimal diagrams of knot-type knotoids have zero height.  
\end{abstract}

\section{Introduction}
Knotoids [24] are knot diagrams with free ends (with the ends possibly in distinct planar regions of the diagram), taken up to classical Reidemeister moves. The moves are not allowed to pass an arc across an end of the diagram. Turaev suggests knotoids as abbreviated knot diagrams that provide an ease of computation of knot invariants. In fact, knotoids can be considered to be a significant extension of classical knot theory, and are worth studying for their own properties. Knotoids in $S^2$ also extend knots in $S^2 \times I$ and bring up many interesting features and problems such as the existence of parity in knotoid diagrams.

After an overview on knotoids and virtual knots and a discussion on the knotoid invariants, the {\it loop bracket polynomial} and the {\it arrow polynomial} in Section \ref{sec:basics}, we study the Gauss codes of knotoid diagrams in Section \ref{sec:parity}. We observe that during a traverse (a walk along the diagram) of a knotoid diagram in $S^2$ and in $\mathbb{R}^2$, each crossing can be called odd or even  according to whether one meets an even or an odd number of other crossings in the walk from the given crossing and returning to it. See Figure \ref{fig:loop}. Note that the parity of the crossing $c$ in Figure \ref{fig:loop} is an odd crossing.
\begin{figure}[H]
\centering
\includegraphics[width=.25\textwidth]{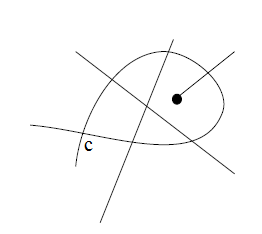}
\caption{A loop at an odd crossing enclosing an endpoint}
\label{fig:loop}
\end{figure}
Virtual knots represented by knots in thickened surfaces of arbitrary genus, also admit parity of this kind. Parity is an important theme in virtual knot theory and is related to a number of virtual knot invariants such as the odd writhe, the parity bracket polynomial, the affine index polynomial and the arrow polynomial. 

In Section \ref{sec:virtclosure} we study knotoids in relation with virtual knots. The endpoints of a knotoid can be connected to obtain a virtual knot called the virtual closure of the knotoid. Knotoids are related to virtual knots of genus at most $1$ through the virtual closure map. Many knotoid invariants are defined by applying virtual knot invariants to the virtual closure of the knotoid. In this way we invoke virtual parity in studying knotoids, and in many cases the induced invariants can be defined directly in terms of knotoids. This is the case for the odd writhe \cite{GK1}, the parity bracket polynomial \cite{GK1}, the arrow polynomial \cite{GK1} and the affine index polynomial \cite{GK1}. Then issues of parity are referred directly to the knotoid diagrams.
	
In Section \ref{sec:surj} we show that the virtual closure map is a non-surjective map by examining the surface bracket states of virtual knots lying in the image of the virtual closure map. The non-surjectivity of the virtual closure map for knotoids is important because the apparent structure of knotoids is quite different from the structure of virtual knots of genus no more than one. In particular we have conjectured that the Jones polynomial for knotoids detects the trivial knotoid \cite{GK1}. This is not true for genus one virtual knots. Our examples show that an infinite class of genus one virtual knots of unit Jones polynomial are not in the image of the virtual closure map on knotoids. The problem remains open to fully understand the image of the virtual closure map. 

Manturov shows that minimal genus representations of virtual knots admit minimal number of crossings by using a parity projection map. By using this result and the virtual closure we prove a conjecture of Turaev in Section \ref{sec:minimal}: Minimal diagrams of knot-type knotoids (knotoids admitting diagrams whose endpoints lie in the same planar region) have zero complexity (we often refer to the complexity of a knotoid as its height). This gives us the following result: The crossing number of a knot-type knotoid is equal to the crossing number of the knot that is the closure of the knotoid.

\section{Knotoids and virtual knots}
\subsection{Basics on knotoids}\label{sec:basics}
%The theory of knotoids, introduced by Turaev \cite{Tu} is a natural generalization of classical knot theory.
A \textit{knotoid diagram} $K$ in an oriented surface $\Sigma$ \cite{Tu} is an immersion of the unit interval $[0,1]$ into $\Sigma$ with a finite number of transversal double points, each of which is endowed with over or under information and called a 
\begin{wrapfigure}{r}{0.3\textwidth}
  \centering
	\hspace{-1cm}
	\includegraphics[width=0.3\textwidth]{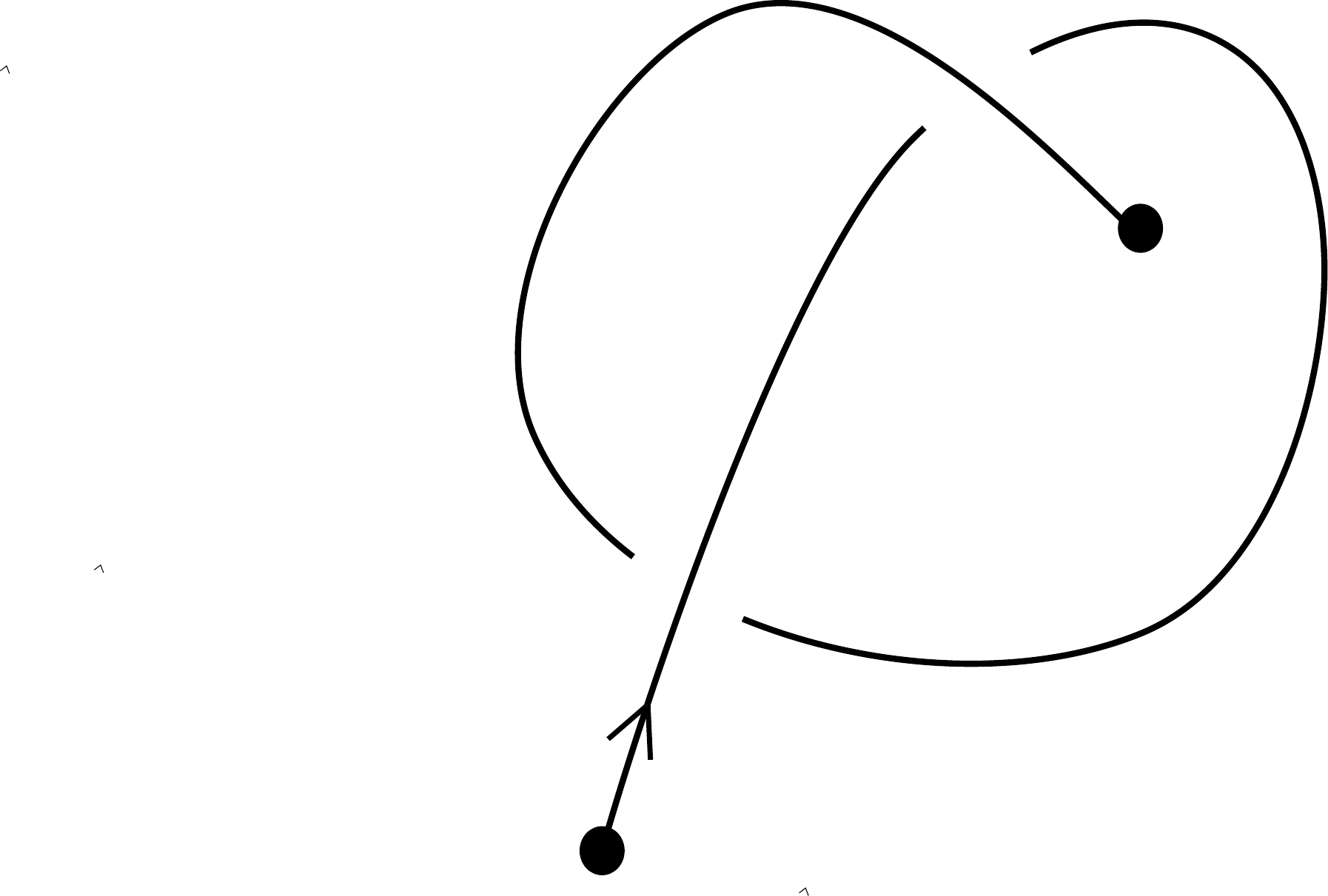}
%\end{center}
\vspace{-12.5pt}
  \hspace{-12.5pt}
\caption{\small A knotoid diagram}
   \vspace{-10pt}
	 \label{fig:kntd}
\end{wrapfigure}
 {\it crossing} of $K$. The images of $0$ and $1$ are the endpoints of $K$. The endpoints are distinct from any of the crossings, and are called \textit{tail} and \textit{head}, respectively. $K$ admits an orientation from the tail to the head. Figure \ref{fig:kntd} shows an example of a knotoid diagram (in $\mathbb{R}^2$ or $S^2$). The trivial knotoid diagram is an embedding of the unit interval $[0,1]$ in $\Sigma$. %We study knotoids in $S^2$ and in $\mathbb{R}^2$ in this paper.

On knotoid diagrams in $S^2$ or in $\mathbb{R}^2$ we allow Reidemeister I, II and III moves each taking place in a local disk free of endpoints, and isotopy of $S^2$ or  $\mathbb{R}^2$, respectively.  
\begin{wrapfigure}{r}{0.3\textwidth}
  \centering
	\includegraphics[width=0.3\textwidth]{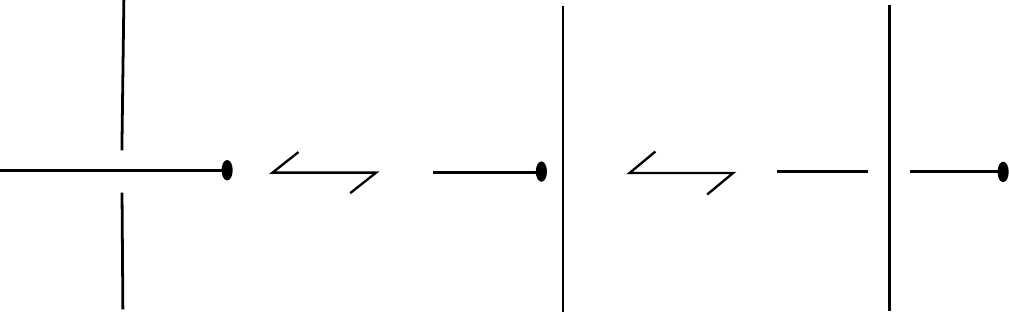}
%\end{center}
\vspace{-12.5pt}
  \hspace{-12.5pt}
\caption{\small Forbidden moves}
\label{subfig:for}
   \vspace{-10pt}
	 \label{fig:forbidden}
\end{wrapfigure}
Pulling/pushing the arc adjacent to an endpoint over or under a transversal arc, as shown in Figure~\ref{subfig:for}, is a \textit{forbidden knotoid move}. Notice that, if forbidden moves were allowed, any knotoid diagram in $S^2$ or in $\mathbb{R}^2$ could be clearly turned into the trivial knotoid diagram. 

A \textit{knotoid} in $S^2$ (or a {\it spherical knotoid}) is an equivalence class of knotoid diagrams in $S^2$ considered up to the equivalence relation generated by the Reidemeister moves and isotopy in $S^2$. A \textit{planar knotoid} is an equivalence class of knotoid diagrams in $\mathbb{R}^2$ taken up to the Reidemeister moves and the isotopy in $\mathbb{R}^2$.

Knotoids in $S^2$ extend classical knot theory \cite{Tu}. Let $\kappa$ be an oriented knot in $S^3$ (or in $S^2 \times I$) and $D$ be an oriented diagram of $\kappa$ lying in $S^2$. Cutting out an open arc from $D$ which is free of crossings results in a knotoid diagram with endpoints in the same local region of the diagram. This induces a well-defined injective map $\alpha$ from the set of oriented knots in $S^3$ to the set of knotoids in $S^2$ \cite{Tu}. The spherical knotoids obtained by this map are called \textit{knot-type knotoids}. Knot-type knotoids admit at least one diagram with its endpoints lying in the same planar region, and they carry the same knottedness information with the knot they are associated with. The spherical knotoids that do not lie in the image of the map $\alpha$ are called \textit{proper knotoids}. % A proper knotoid has non-zero height.

The map $\alpha$ does not form a well-defined map when its image is considered to be the set of knotoids in $\mathbb{R}^2$. This is because unlike classical knots, considering a knotoid diagram in $S^2$ and in $\mathbb{R}^2$ may result in two different knotoids since `Whitney flipping' moves that pull a strand across the north pole are available for spherical knotoid diagrams. The reader can verify that the knotoid diagram in Figure \ref{fig:ex} represents the trivial knotoid in $S^2$ although it can be verified by the loop bracket polynomial (see Section \ref{sec:loop}) that it is non-trivial as a planar knotoid. This implies that one can associate different planar knotoids to the trivial knot through the map $\alpha$.

Another aspect of knotoids in $S^2$ is that they can be considered as simpler representations of classical knots. Two types of classical closures, namely the underpass closure and the overpass closure, are available for a given knotoid diagram. In the underpass closure, two endpoints of a knotoid diagram are connected by an arc which is declared to go under the strands it encounters. This closure operation defines a well-defined and surjective map from the set of knotoids in $S^2$ to the set of oriented classical  knots \cite{Tu}. Note that the underpass closure is clearly a bijection on the set of knot-type knotoids. The knotoids closing to isotopic knots in $S^3$ through the underpass closure can be considered as knotoid representatives of the knots. Knotoid representatives of knots in $S^3$ reduce the computational complexity of knot invariants such as the knot group \cite{Tu} since they may have less crossings than any knot diagrams.
%%%%%%%%%%%%%%%%%%%%%%%%%%%%%%%%%%%%%%%%%%%%%
\subsubsection{Loop bracket polynomial}\label{sec:loop}
In \cite{Tu} Turaev generalizes the Kauffman bracket to a $3$-variable bracket polynomial for knotoids in $\mathbb{R}^2$. In Turaev's $3$-variable bracket polynomial, a variable is associated with the intersection number of the knotoid diagram with an arc chosen to connect the endpoints and also with the intersection number of the bracket states with the chosen arc, and another variable is associated with the number of circular state components enclosing the long state component containing the endpoints.

We restrict this polynomial by setting the variable that is associated to the intersection number of an arc connecting the endpoints with the diagram and with the bracket states equal to $1$. This restricted version of the Turaev polynomial, so called the {\it loop bracket polynomial}, has the skein expansion of the usual bracket, but keeps track of the nesting of loops in the states that surround the long state component. The new variable $O$, is taken as $O^{m}$ when there is a nest of $m$ loops surrounding the long state component in a given state. See Figure \ref{fig:tri}.
\begin{figure}[H]
  \centering
	\includegraphics[width=0.3\textwidth]{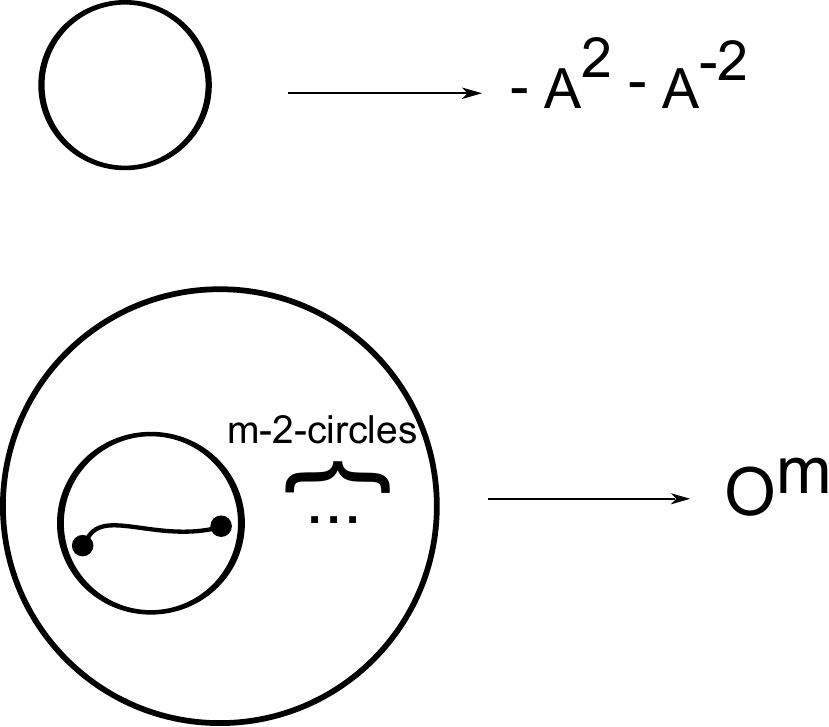}
\caption{\small Circular state components}
	 \label{fig:tri}
\end{figure}
\begin{definition} \cite{GGLDSK} \normalfont
 The \textit{loop bracket polynomial} is defined as follows. 
$$<K>_{O}= (-A^3)^{-wr(K)} \Sigma_{s \in S(K)} A^{\sigma(s)}(-A^2-A^{-2})^{p(s)} O^{q(s)},$$
where $wr(K)$ is the writhe of $K$, $\sigma(s)$ is the number of $A$ smoothing minus the number of $B$ smoothing applied at each crossing of $K$ to obtain the state $s$, $p(s)$ is the number of the circular state components that do not nest around the long state component and $q(s)$ is the number of circular state components nesting around the long state component in the state $s$.
\end{definition}
The loop bracket polynomial is often a stronger invariant than the usual (normalized) bracket polynomial for planar knotoids. The normalized bracket polynomial of the knotoid given in Figure \ref{fig:ex} is trivial but the loop bracket polynomial is nontrivial.   
\begin{figure}[H]
\centering
\includegraphics[width=.55\textwidth]{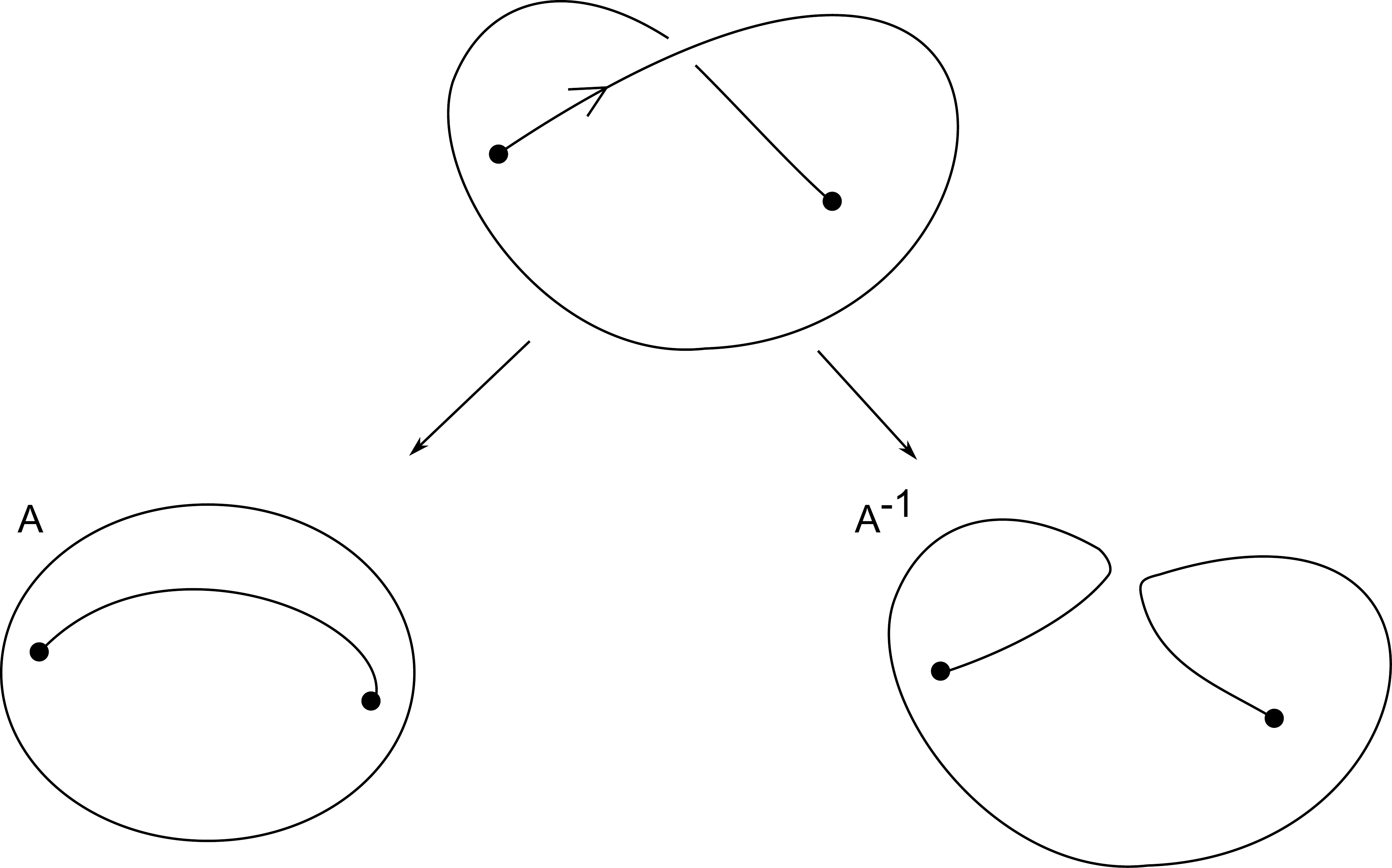}
\caption{The loop polynomial of $K$ is $AO+A^{-1}$}
\label{fig:ex}
\end{figure}
\subsubsection{The arrow polynomials}\label{sec:arrow}
In \cite{GK1} the authors define the arrow polynomial for knotoids in $S^2$ by adapting the arrow polynomial of virtual knots \cite{Ka1}. The arrow polynomial is an oriented generalization of the bracket polynomial of knotoids, defined by assigning new variables to long state components in zig-zag form that are called \textit{irreducible state components}. See Figure \ref{fig:arr}.
\begin{figure}[H]
\centering
\scalebox{1}{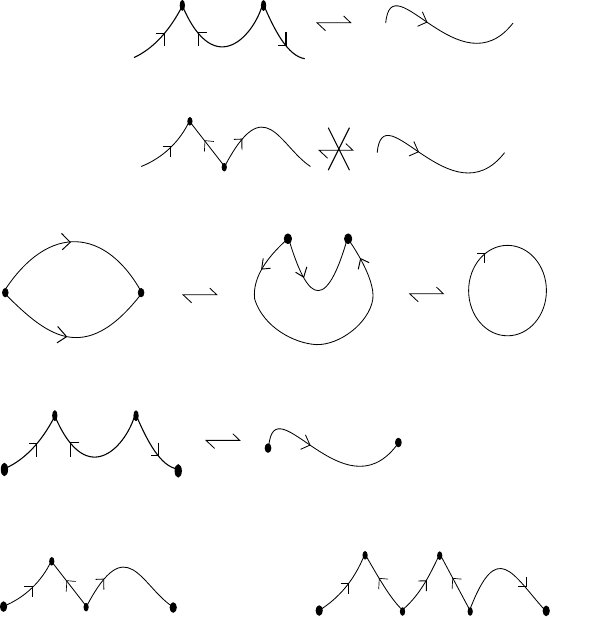}
\caption{}
\label{fig:arr}
\end{figure}
The readers are referred to \cite{GK1} for the definition of the arrow polynomial. Here we note that there exists a chiral version of the arrow polynomial that takes account the chirality of the irreducible state components in $S^2$. The irreducible state components shown in Figure \ref{fig:arrow} are mirror symmetric to each other, and are not isotopic to each other since the the cusps on them point to different local regions in the plane. In the chiral version of the arrow polynomial, the irreducible state components having $n$ zig-zags and are chiral to each other are assigned to different variables $\lambda_n^{+}$  and $\lambda_n^{-}$. 
\begin{figure}[H]
%\vspace{-12pt}
  \centering
	\includegraphics[width=0.3\textwidth]{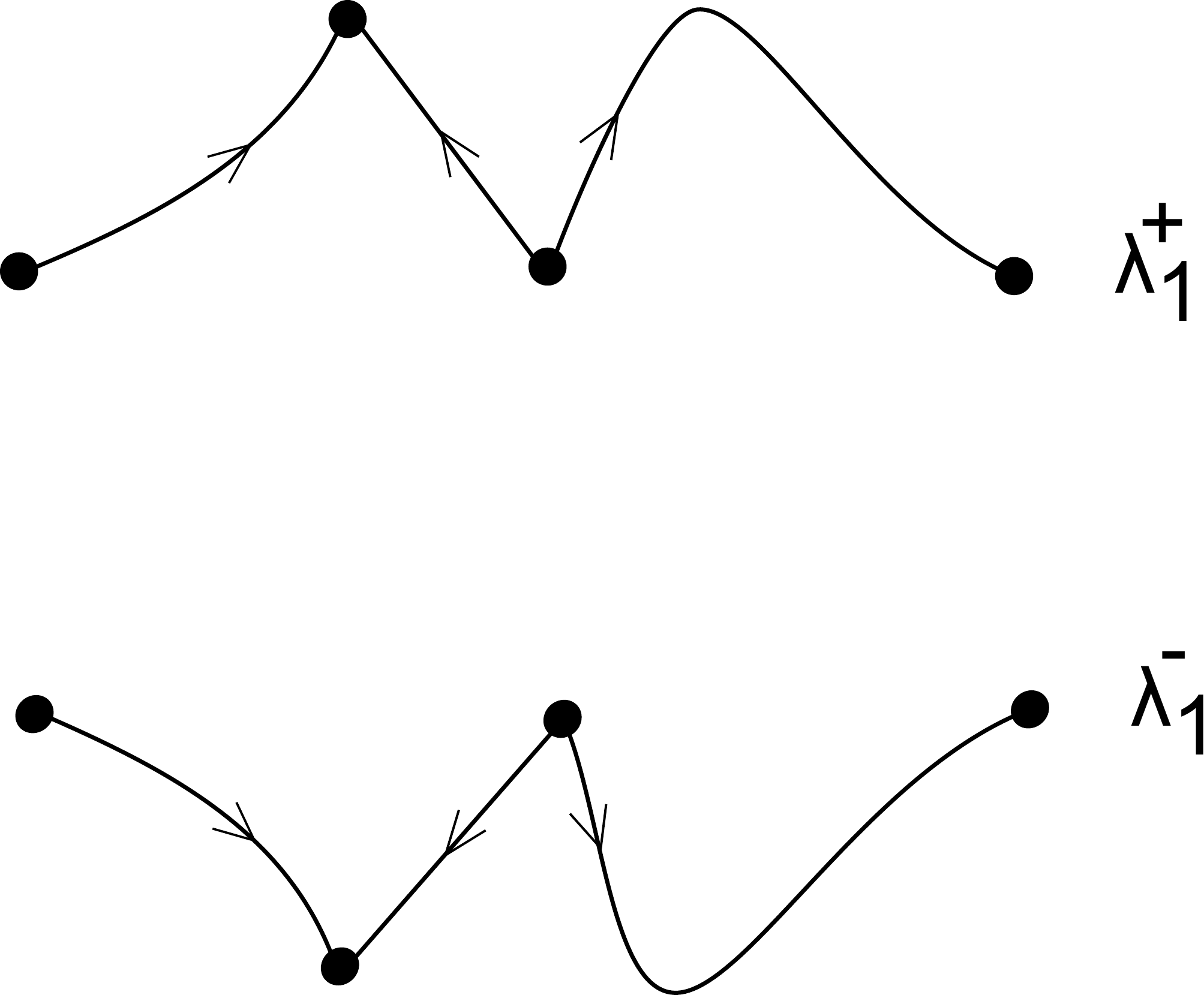}
%\end{center}
%\vspace{-12.5pt}
  %\hspace{-13.5pt}
\caption{\small Irreducible state components}
   \vspace{-10pt}
	 \label{fig:arrow}
\end{figure}
For knotoids in $\mathbb{R}^2$ we can also define a loop arrow polynomial. In the \textit{loop arrow polynomial}, there are three types of variables corresponding to circular state components enclosing long state components. One type of variable is assigned to a long state component without any zig-zags surrounded by a number of circular state components. The other two types of variables are assigned to irreducible state components as shown in Figure \ref{fig:arrow} (with some number of positive or negative zig-zags) that are enclosed by circular state components.
\subsubsection{A complexity invariant of knotoids:The height}
The {\it height} of a knotoid diagram in $S^2$ \cite {Tu} is the least number of crossings created through the underpass closure. The minimum of the heights of knotoid diagrams lying in the same knotoid class is called the {\it height} of the knotoid, and it is an invariant for knotoids in $S^2$ \cite{Tu}. The height of a knotoid is an intrinsic invariant determining the type of a knotoid. Clearly, a knotoid in $S^2$ has height zero if and only if it is a knot-type knotoid.  In \cite{GK1} the authors show that the \textit{affine index polynomial} and the \textit{arrow polynomial} defined for knotoids in $S^2$ provide lower bound estimations for the height of a knotoid. 
%%%%%%%%%%%%%%%%%%%%%%%%%%%%%%%%%%%%%
%%%%%%%%%%%%%%%%%%%%%%%%%%%%%%%%%%%
\subsection{Basics on virtual knots}
 The theory of virtual knots was introduced by the second listed author \cite{Ka1,Ka2}. A \textit{virtual knot} is an embedding of the unit circle in a thickened surface of arbitrary genus modulo isotopies and orientation preserving diffeomorphisms plus one-handle stabilization/destabilization of thickened surfaces. 
A virtual knot may be represented by different embeddings in different thickened surfaces. We call a representation of a virtual knot a \textit{minimal representation} if it is an embedding of the knot that does not admit destabilization. %In other words, it is a representation of a virtual knot in a thickened surface 
\begin{theorem}(Kuperberg)\cite{Ku}
A minimal representation of a virtual knot is unique up orientation preserving homeomorphisms.
\end{theorem}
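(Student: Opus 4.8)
The plan is to translate the statement into diagrams on closed surfaces and then combine a geometric confluence property for destabilizations with an abstract rewriting argument. First I would set up the dictionary between the thickened-surface picture and knot diagrams on closed oriented surfaces: a knot $K\subset\Sigma\times I$ has a generic vertical projection to a knot diagram $D\subset\Sigma$, and conversely such a diagram thickens to a knot in $\Sigma\times I$; by the definition of a virtual knot given above, the only moves to track are ambient isotopies, orientation-preserving homeomorphisms, and handle stabilizations/destabilizations. I would then record two facts. (i) A \emph{minimal} representation (one admitting no destabilization) may be taken so that $D$ fills $\Sigma$, i.e.\ every complementary region of $D$ in $\Sigma$ is a disk, since a non-disk region produces an essential simple closed curve of $\Sigma$ disjoint from $D$ and hence a destabilization. (ii) For a filling diagram, a destabilization is precisely the operation of surgering $\Sigma$ along an essential simple closed curve $\gamma\subset\Sigma$ with $\gamma\cap D=\emptyset$ and re-filling, and any such surgery strictly decreases the genus of $\Sigma$.

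The heart of the argument is a confluence statement: if a representation admits destabilizations along essential curves $\gamma_1$ and $\gamma_2$, then the two resulting representations admit a common further destabilization. To prove it I would isotope $\gamma_1$ and $\gamma_2$, keeping them disjoint from $D$, so as to minimize $|\gamma_1\cap\gamma_2|$. If the intersection is empty, the two surgeries are supported in disjoint pieces of $\Sigma\setminus D$, can be performed in either order, and lead to a common representation. If not, I would surger $\gamma_1$ along an outermost subarc of $\gamma_2$: this produces a simple closed curve $\gamma_1'$ that is still disjoint from $D$, can be taken essential, and meets $\gamma_2$ in strictly fewer points; one then checks that the representations obtained from the pairs $\{\gamma_1,\gamma_2\}$ and $\{\gamma_1',\gamma_2\}$ share a common further destabilization, and inducts on $|\gamma_1\cap\gamma_2|$. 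Because destabilization strictly drops the genus, the descending chains terminate, so Newman's lemma (local confluence together with termination implies confluence) promotes this to global confluence: from any representation, repeated destabilization terminates in a minimal representation that is \emph{unique} up to orientation-preserving homeomorphism.

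The theorem now follows formally. Two minimal representations of the same virtual knot are joined by a zig-zag of isotopies, homeomorphisms, stabilizations, and destabilizations. Assign to each representation along the zig-zag its minimal form, well defined up to orientation-preserving homeomorphism by the previous step. Isotopies and homeomorphisms leave the minimal form unchanged; so does each stabilization/destabilization edge, since a stabilization is the reverse of a destabilization and the minimal form is unaffected by destabilizing. Hence the two endpoints of the zig-zag have the same minimal form, and being themselves minimal they coincide with it, so there is an orientation-preserving homeomorphism of pairs carrying one representation to the other.

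The step I expect to be the real obstacle is the confluence when $\gamma_1$ and $\gamma_2$ intersect. One has to surger carefully so that the new curve $\gamma_1'$ remains disjoint from the diagram (automatic), remains essential (which requires choosing the right component of the surgery and ruling out inessential outcomes, including the separating case, where a destabilization means discarding a positive-genus piece), and --- the delicate point --- so that the two destabilized manifolds provably share a further common destabilization rather than merely having equal genus. Pinning down the right complexity function so that both the induction on $|\gamma_1\cap\gamma_2|$ and the termination of destabilization go through is the crux of the matter; everything else is routine cut-and-paste surface topology together with formal rewriting.
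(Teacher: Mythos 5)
This statement is not proved in the paper at all: it is Kuperberg's theorem, imported verbatim from \cite{Ku}, so there is no in-paper argument to compare against. Your outline does follow the general shape of Kuperberg's actual proof (termination of destabilization, a confluence statement for reductions, and a Newman-type assembly into uniqueness of the irreducible form), and the final zig-zag step and the termination argument are fine as far as they go.

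The gap is exactly where you flag it, but it is larger than you suggest, because your formalization of destabilization is already too weak to even state the local confluence lemma correctly. A destabilization of $(\Sigma,K)$ is compression along a vertical annulus $\gamma\times I$ that is disjoint from $K$ \emph{after an isotopy of $K$ in $\Sigma\times I$} (equivalently, after Reidemeister moves on the diagram), not along a curve disjoint from one fixed diagram $D$. Two destabilizing curves $\gamma_1,\gamma_2$ are each disjoint from the knot only after possibly different isotopies, so you cannot assume both avoid the same filling diagram and then perform outermost-arc surgery purely in the surface. The correct setting is two essential annuli $A_1,A_2$ properly embedded in the complement of $K$ in $\Sigma\times I$; their intersection consists of circles as well as arcs, and the innermost-circle/outermost-arc exchanges must be carried out on the annuli in the $3$-manifold, with essentiality of the exchanged annuli, and the fact that they induce the same reductions, verified there. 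That is precisely the technical content of Kuperberg's proof and is not routine cut-and-paste surface topology. Since you explicitly defer this step (``the crux of the matter''), the proposal as written is a plausible strategy sketch rather than a proof; completing it would amount to reproducing the annulus argument of \cite{Ku}, which is what the paper implicitly relies on by citing that reference.
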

\begin{definition}\normalfont
The \textit{virtual genus} of a virtual knot is the genus of the surface of its minimal representation.
\end{definition}
Classical knots (knots in $S^3$ or in $S^2 \times I$) can be considered as virtual knots of (virtual) genus $0$. From Kuperberg theorem it follows that classical knot theory properly embeds into virtual knot theory, that is, if two classical knots are virtually equivalent then they are also equivalent to each other by classical Reidemeister moves. 

The theory of virtual knots has also a diagrammatic formulation: A virtual knot in a thickened surface of some genus can be represented by a \textit{virtual knot diagram} in $S^2$ or in $\mathbb{R}^2$ that contains a finite number of \textit{real/classical crossings}, 
\begin{wrapfigure}{r}{0.4\textwidth}
\vspace{-10pt}
  \centering
	\includegraphics[width=0.4\textwidth]{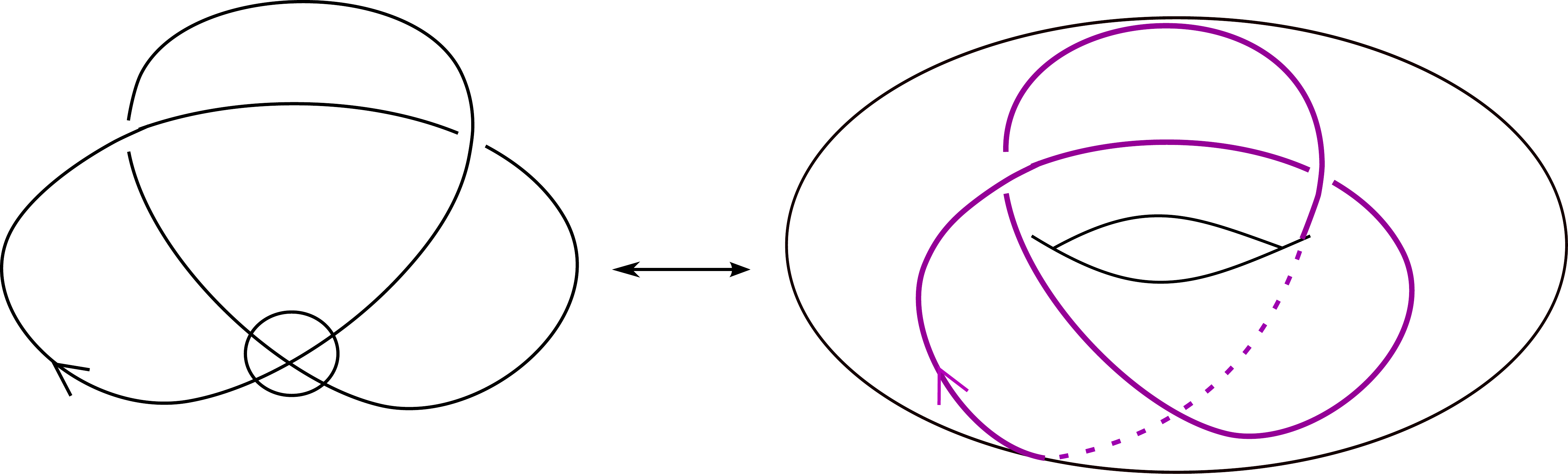}
%\end{center}
  \hspace{-12.5pt}
\caption{\small A virtual knot diagram
}
   \vspace{-10pt}
	 \label{fig:virt}
\end{wrapfigure}
and \textit{virtual crossings}. A virtual crossing is indicated by a small circle placed around a crossing point as shown in Figure \ref{fig:virt}. The arcs containing a virtual crossing correspond to arcs of the virtual knot one of which lies at the front of a handle and the other lies at the back of the same handle of the thickened surface. 
\begin{wrapfigure}{r}{0.35\textwidth}
  \centering
	\vspace{-13pt}
	\includegraphics[width=0.35\textwidth]{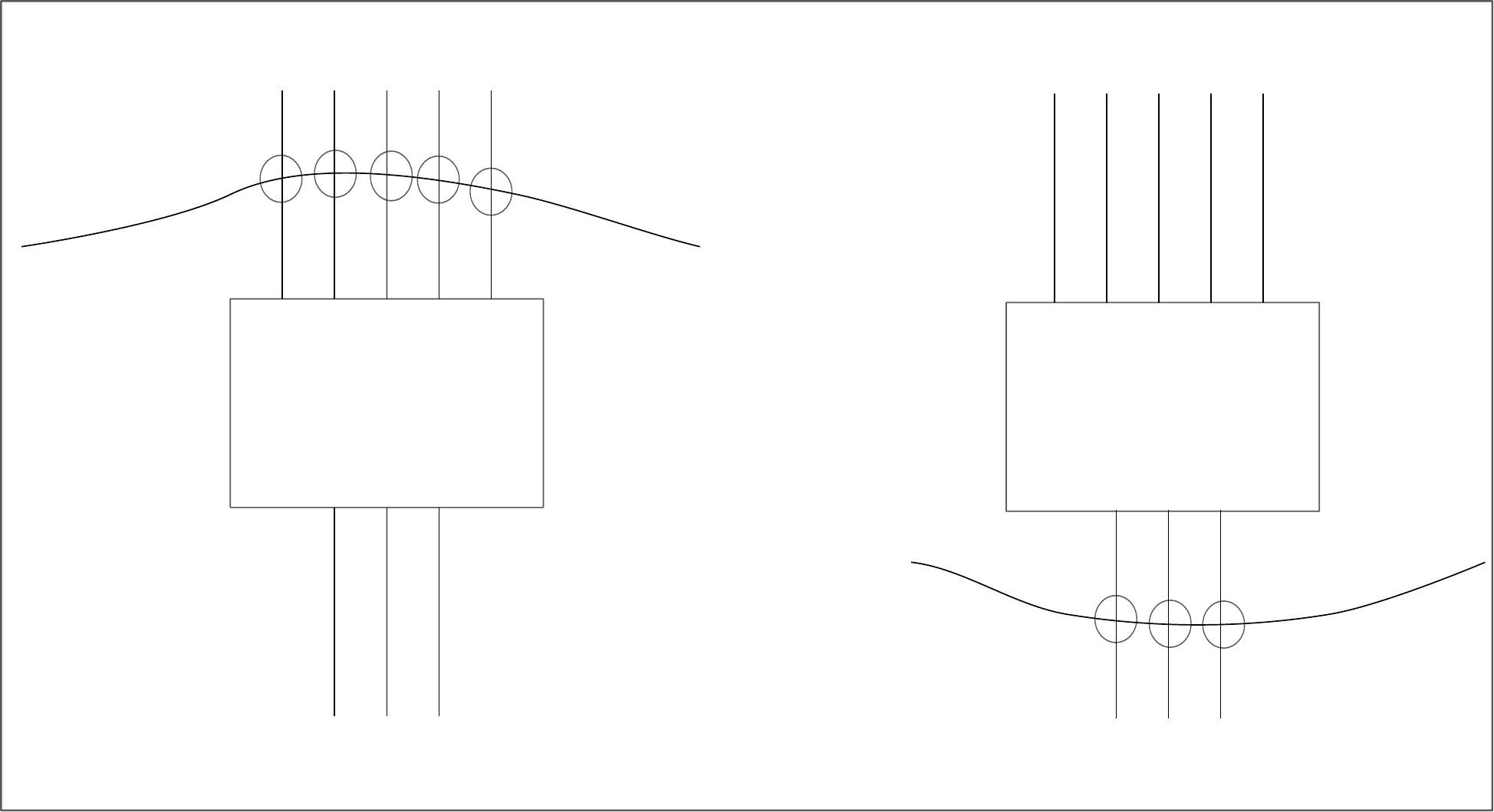}
%\end{center}
\vspace{-12.5pt}
  \hspace{-12.5pt}
\caption{\small The detour move
}
   \vspace{-10pt}
	 \label{fig:det}
\end{wrapfigure}
The moves on virtual knot diagrams are generated by the usual Reidemeister moves plus the detour move. The \textit{detour move} allows a segment with consecutive sequence of virtual crossings to be excised and replaced by any other such a segment with a consecutive virtual crossings, as shown in Figure \ref{fig:det}. Two virtual knot diagrams are \textit{virtually equivalent} if they can be related to each other by a finite sequence of the Reidemeister and detour moves. A \textit{virtual knot} is a virtual equivalence class of virtual knot diagrams.

Theorem \ref{thm:vkt} provides a one-to-one correspondence between the topological and the diagrammatic approaches of virtual knot theory. The transition between the two approaches is facilitated by abstract knot diagrams. An abstract knot diagram associated to a virtual knot diagram is a ribbon-neighborhood surface containing the knot diagram. The abstract knot diagram surface is an oriented surface with boundary. In Figure \ref{fig:abstract} we illustrate the abstract knot diagram associated with the virtual knot diagram given in Figure \ref{fig:virt}. For more details on abstract knot diagrams the reader is referred to \cite{KK,CKS}. 
\begin{figure}[H]
\centering
\includegraphics[width=.7\textwidth]{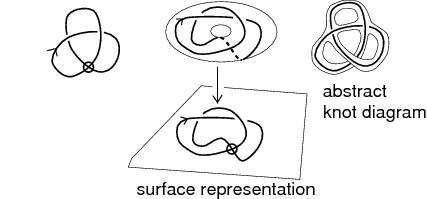}
\caption{\small Virtual knot representations}
\label{fig:abstract}
\end{figure}
 \begin{theorem} (\cite{Ka1, Ka3, CKS})\label{thm:vkt}
Two virtual knot diagrams are virtually equivalent if and only if their surface embeddings are equivalent up to isotopy in the thickened surfaces, diffeomorphisms of the surfaces, and addition/removal of empty handles.
\end{theorem}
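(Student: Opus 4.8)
The plan is to prove the equivalence by routing both directions through the intermediate category of \emph{abstract knot diagrams} in the sense of Kamada--Kamada and Carter--Kamada--Saito \cite{KK,CKS}, as indicated in the discussion preceding the statement. First I would pin down two constructions. Given a virtual knot diagram $D$ in $\mathbb{R}^2$, let $\Sigma_D$ be the compact oriented surface-with-boundary obtained as a ribbon (regular) neighborhood of the underlying $4$-valent graph of $D$, in which the bands at classical crossings are glued so as to reproduce the planar picture while the bands at virtual crossings are glued with the ``abstract'' identification that makes the two strands pass on opposite sides of a handle rather than actually intersect; the over/under data of $D$ decorates the classical crossings of $D\subset\Sigma_D$. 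Capping every boundary circle of $\Sigma_D$ with a $2$-disk yields a closed oriented surface $\widehat\Sigma_D$ together with an embedding of the knot in $\widehat\Sigma_D\times I$ obtained by pushing $D$ slightly into the interior. Conversely, given an embedding $f\colon S^1\hookrightarrow\Sigma\times I$, I would project to $\Sigma$, perturb to make the projection generic, and read off a knot diagram $D_\Sigma$ on $\Sigma$ with transverse double points carrying over/under information; choosing a system of disjoint simple arcs and simple closed curves that cuts $\Sigma$ open to a single disk and is in general position with $D_\Sigma$, and then laying the cut-open surface flat in $\mathbb{R}^2$, produces a virtual knot diagram, each intersection of $D_\Sigma$ with the cutting system becoming a virtual crossing.

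For the ``only if'' direction I would check that each generating move of virtual equivalence corresponds to one of the three permitted operations on thickened-surface embeddings. A classical Reidemeister move occurs inside a disk disjoint from all virtual crossings, hence does not alter $\Sigma_D$ and only isotopes the embedded knot. The detour move is the essential case: excising a strand that carries a consecutive run of virtual crossings and reinserting it along a different arc changes the ribbon neighborhood $\Sigma_D$ only by the addition and deletion of bands across which no part of the resulting knot passes, so after capping this is precisely an ambient isotopy followed by the removal (and addition) of empty handles. I would verify this by a local analysis of the ribbon neighborhood before and after the move, so that the assignment $D\mapsto(\widehat\Sigma_D\times I,\,f_D)$ descends to a well-defined map from virtual knots to stable-equivalence classes of thickened-surface embeddings.

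For the ``if'' direction I would show that the construction $f\mapsto D_\Sigma\mapsto$ (virtual diagram) is well defined up to virtual equivalence and inverse to the construction above. Two different choices of cutting system for the same $\Sigma$ are connected by a finite sequence of elementary moves — isotopies of the arcs and curves, arc/handle slides, and stabilizations of the cutting system — and I would check that each such move changes the resulting virtual diagram only by a detour move or a classical Reidemeister move; crucially, a curve of the cutting system that the knot does not thread contributes no virtual crossings and may be ignored, and one that it does thread can be slid so that the crossings it creates are absorbed by a detour move. Similarly, stabilizing or destabilizing $\Sigma\times I$ along a handle disjoint from a representative of the knot leaves $D_\Sigma$ unchanged, while a handle that meets the knot can be traded, up to detour moves, for an extra curve in the cutting system. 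Composing with the genericity of the projection, this shows that virtually inequivalent diagrams cannot give stably equivalent embeddings, completing the correspondence.

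The step I expect to be the main obstacle is the detour-move bookkeeping on both sides. On the forward side one must be sure that \emph{every} detour move — which may carry a strand across an arbitrarily intricate portion of the diagram — alters the capped surface only by empty handles and ambient isotopy, never by a genuinely different surgery; and on the reverse side one must show that any two cutting systems for a fixed $\Sigma$ are joined by elementary moves each realized on the diagram by a detour or classical move, while never being forced into a forbidden configuration. Keeping track of how the knot sits relative to the changing ribbon neighborhood (resp.\ cutting system) is the delicate part, which is exactly why it pays to set up the abstract-knot-diagram formulation first: there the moves are purely local and the combinatorics of the bands can be checked one band at a time.
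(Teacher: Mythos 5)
The paper does not actually prove Theorem~\ref{thm:vkt}; it is quoted as a known result with citations to \cite{Ka1,Ka3,CKS}, and the only supporting material in the text is the informal description of abstract knot diagrams and Figure~\ref{fig:abstract}. Your outline is, in substance, the standard proof from those references: pass through abstract knot diagrams, check that classical Reidemeister moves and the detour move preserve the stable equivalence class of the capped thickened-surface embedding, and conversely show that the planar diagram recovered from a surface diagram is well defined up to virtual equivalence under changes of cutting system, isotopy, and (de)stabilization. Two points deserve tightening. First, your claim that a classical Reidemeister move ``does not alter $\Sigma_D$'' is literally false for the ribbon surface: R1 and R2 change the number of crossings and hence the homeomorphism type and boundary of $\Sigma_D$; the correct statement is that the move is supported in a disk of the capped surface $\widehat\Sigma_D$, so the capped embedding changes only by isotopy together with capping off the newly created or destroyed boundary circles --- this is exactly where ``addition/removal of empty handles'' can already be needed, not only at detour moves. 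Second, in the ``if'' direction the ambiguity is not only the choice of cutting system for a fixed surface diagram: an ambient isotopy in $\Sigma\times I$ generically passes through Reidemeister moves of the diagram $D_\Sigma$ on $\Sigma$, and these must be shown to occur (after a further isotopy) away from the cutting system so that they descend to classical Reidemeister moves in the plane; your sentence about genericity gestures at this but it is a separate case from the cutting-system bookkeeping and should be argued explicitly. With those two cases added, your sketch matches the argument in \cite{KK,CKS} and in \cite{Ka3}.
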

Another approach to study virtual knot theory is via oriented Gauss diagrams \cite{Go}. An oriented Gauss diagram is an oriented circle with a number of chords each connecting a pair of points on the circle. Each pair of points connected by a chord corresponds to a real crossing and the chord encodes the information of real crossing with an orientation on it. The equivalence on Gauss diagrams is generated by the abstract Reidemeister moves that are analogues of Reidemeister moves on Gauss diagrams.      
\begin{theorem}\cite{Go,Ka1}
Two virtual knot diagrams are virtually equivalent if and only if their corresponding Gauss diagrams are related to each other by abstract Reidemeister moves.
\end{theorem}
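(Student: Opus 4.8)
The plan is to prove both implications by tracking the effect of the allowed diagrammatic moves on the associated Gauss diagram, supplemented by a realization lemma for the harder (converse) direction.

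\textbf{Forward direction.} Suppose $D$ and $D'$ are virtually equivalent, and fix a finite sequence of Reidemeister and detour moves carrying $D$ to $D'$. First I would observe that a detour move leaves the Gauss diagram unchanged: it excises an arc meeting only virtual crossings and replaces it by another such arc, and neither the virtual crossings nor the planar routing of arcs between real crossings is recorded in the Gauss diagram. Next I would check the three classical Reidemeister moves one at a time. An R1 move adds or deletes an isolated kink, which corresponds on the Gauss diagram to adding or deleting a chord whose two endpoints are adjacent on the circle (the abstract R1 move); an R2 move adds or deletes a pair of chords of opposite sign with adjacent, parallel endpoints (abstract R2); an R3 move is a sign-preserving reshuffling of three mutually adjacent chord-ends (abstract R3). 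Hence along the whole sequence the Gauss diagram changes only by abstract Reidemeister moves.

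\textbf{Converse direction.} The core is a \emph{realization lemma}: every Gauss diagram $G$ is the Gauss diagram of some virtual knot diagram, and any two virtual knot diagrams with Gauss diagram $G$ are related by detour moves and planar isotopy. To realize $G$, I would place its real crossings as disjoint transversal double points in a standard position (say, along a horizontal line) in the prescribed cyclic order, with over/under and sign data as specified by $G$, and then join consecutive crossing-strands by arcs that cross each other transversally, declaring every newly created crossing virtual; the resulting diagram has Gauss diagram $G$. For the uniqueness statement, two realizations of $G$ agree inside small disks about the real crossings, and outside these disks they consist of two systems of arcs with the same endpoints and only virtual crossings among themselves — exactly the situation the detour move is designed to handle, so the two realizations are detour-equivalent. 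Given the lemma, each abstract Reidemeister move on $G$ can be lifted: using detour moves bring a realization into a local standard form in which the relevant chord-ends are adjacent, perform the matching classical Reidemeister move, and verify that the Gauss diagram has changed precisely by the given abstract move. Concatenating these liftings turns a sequence of abstract Reidemeister moves between $G$ and $G'$ into a sequence of Reidemeister and detour moves between realizations; combined with the lemma applied to the originally given diagrams, this shows those diagrams are virtually equivalent.

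\textbf{Main obstacle.} I expect the realization lemma — specifically the assertion that two virtual diagrams with the same Gauss diagram differ only by detour moves — to be the delicate point. The arcs joining real crossings can wind around the plane and cross one another many times, and one must show the detour move is powerful enough to reroute each such arc, with its endpoints fixed, into any prescribed configuration; making this precise calls for an induction on the number of virtual crossings (or a reduction of each arc to a monotone standard form) rather than an appeal to intuition. Everything else reduces to a routine local check of how R1, R2, and R3 act on chords.
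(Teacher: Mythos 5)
The paper does not prove this statement; it is quoted with citations to Goussarov--Polyak--Viro and to Kauffman's \emph{Virtual Knot Theory}, and your outline is precisely the standard argument given in those sources, so there is no divergence of approach to report. Your sketch is correct: the forward direction is the routine local check you describe (detour moves act trivially on the Gauss diagram, and each classical move induces the corresponding abstract move), and the converse rests on the realization lemma. One remark on the point you flag as delicate: in the formulation used in this paper the detour move is already defined to excise an \emph{entire} segment whose interior meets only virtual crossings and replace it by \emph{any} other such segment with the same endpoints, so the rerouting of the $2n$ connecting arcs between real-crossing neighborhoods is accomplished by one detour move per arc; no induction on the number of virtual crossings is needed, only the observation that each connecting arc contains no real crossings in its interior and that any new intersections created by the replacement arc are declared virtual.
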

The notions of virtual knot theory including parity apply to knotoids naturally. Parity in virtual knot diagrams and related invariants are examined in \cite{Ma,Ma3,Kauf}. Manturov studied \cite{Ma} a parity projection map from virtual knots to classical knots. We discuss more on this map in Section \ref{sec:minimal}. 
%%%%%%%%%%%%%%%%%%%%%%%%%%%%%%%%%%%%%%%%%%%55
\section{Parity in knotoids}\label{sec:parity}
\subsection{Gauss codes and the Gaussian parity}
  A \textit{Gauss code} is a linear code that consists of a sequence of labels assigned to crossings of a knotoid diagram in $S^2$
	\begin{wrapfigure}{r}{0.22\textwidth}
  \centering
	\vspace{-12pt}
	\includegraphics[width=0.14\textwidth]{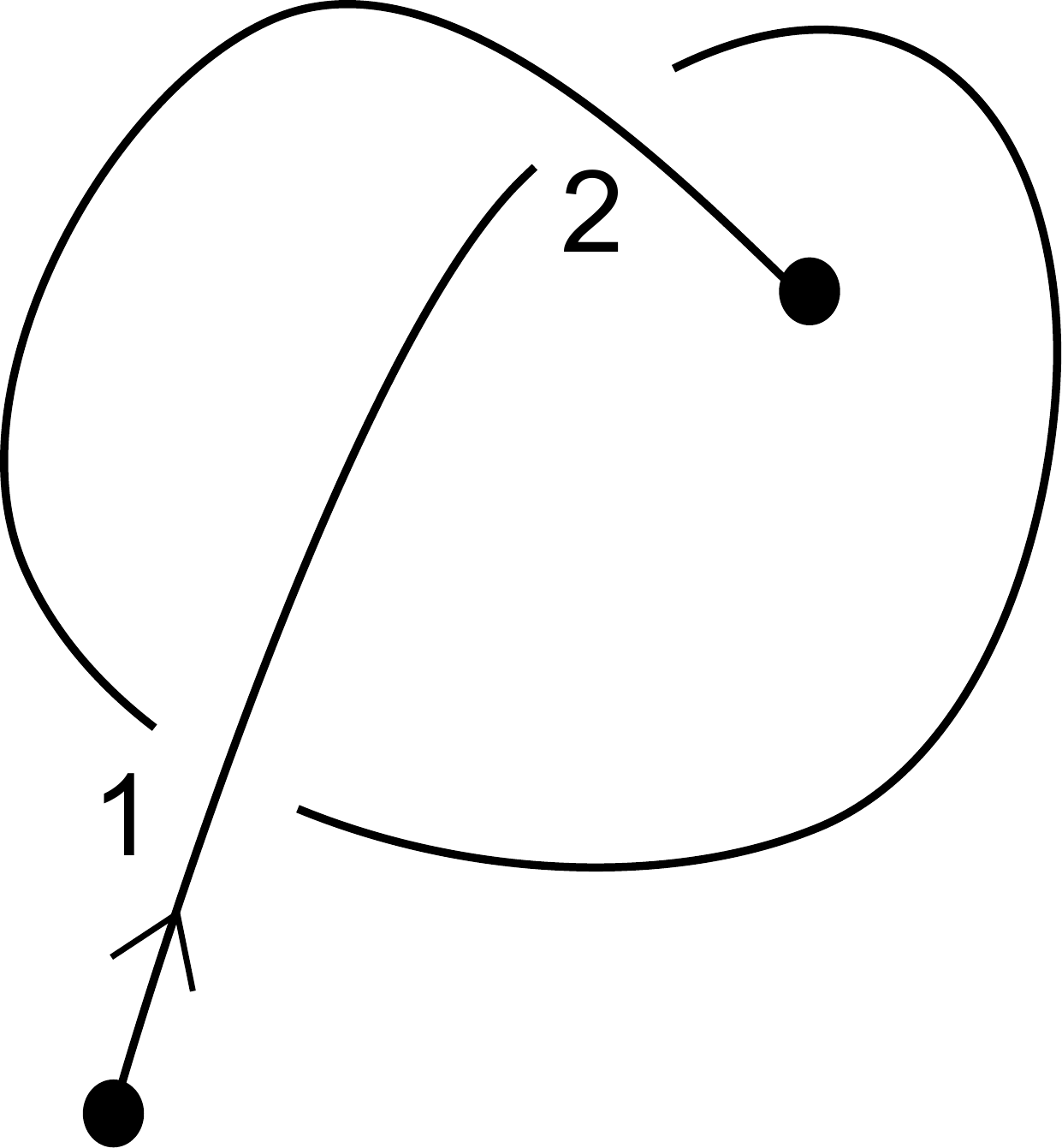}
%\end{center}
  \vspace{-13pt}
\caption{\small The Gauss code is $+O1+U2U1O2$}
   \vspace{-10pt}
	 \label{fig:gauss}
\end{wrapfigure}
or in $\mathbb{R}^2$. It encodes crossings that are encountered during a trip along the knotoid diagram following the orientation. 
Since any crossing is traversed twice, each label in the code appears twice. This gives a code of length $2n$, where $n$ is the number of crossings. We keep the information of the passage through a crossing either as an overcrossing or an undercrossing by adding the symbols $O$ and $U$, respectively, and the signs of the crossings by putting $+$ or $-$ next to the label. Clearly, each knotoid diagram has a unique Gauss code. See Figure \ref{fig:gauss} for an example. 
	
A Gauss code of a knotoid diagram is said to be \textit{evenly intersticed} if there is an even number of labels between two appearances of any label. Similarly for classical knots \cite{Ka1}, it is shown in \cite{GK1} that the Gauss code of a knotoid diagram in $S^2$ is evenly intersticed if and only if it is a knot-type knotoid diagram. Equivalently, the Gauss code of a proper knotoid diagram has a non-evenly intersticed Gauss code \cite{GK1}. This fact gives rise to a well-defined parity map that is defined on the set of crossings in knotoid diagrams and takes values in $\mathbb{Z}_2$. A crossing of a knotoid diagram is assigned parity  $1$ and called \textit{odd} if there are an odd number of labels in between the two appearances of the crossing, otherwise it is assigned to parity $0$ and called an \textit{even} crossing. 

Using this parity of crossings, we define an invariant for knotoids called the \textit{odd writhe} \cite{GK1}.
\begin{definition}\normalfont
 {\it Odd writhe} of a knotoid diagram $K$ in $S^2$, $J(K)$ is the sum of the signs of the odd crossings,
\begin{center}
$J(K) = \sum_{c\in Odd(K)}sign(c)$,
\end{center}
where $Odd(K)$ is the set of odd crossings in $K$.
\end{definition}
The knotoid represented by the knotoid diagram in Figure \ref{fig:gauss} has odd writhe equal to $2$.

One proves that $J(K)$ is invariant by observing that the crossing at an Reidemeister I move is always even, the two crossings at an Reidemeister II move are either both even or both odd, and at an Reidemeister III move either two crossings are odd  or all three crossings are even. These properties of Gaussian parity are axiomatized by Manturov in \cite{Ma} and form the basis for other parity invariants of virtual knots.
\vspace{-2cm}
\subsection{Parity bracket polynomial of knotoids}
 The parity bracket polynomial of V.~Manturov \cite{Ma} is a modification of the bracket polynomial that uses the parity of crossings in virtual knots. With the existence of even and odd crossings in knotoid diagrams, the parity bracket polynomial can be defined for knotoid diagrams. 
%\subsubsection{Parity bracket polynomial for spherical knotoids}

For a knotoid diagram $K$ in $S^2$ or in $\mathbb{R}^2$, a \textit{parity state} is defined to be a labeled graph that is obtained by first replacing each odd crossing of $K$ with a graphical node and then the remaining crossings are expanded by the bracket relation shown at the top of Figure \ref{fig:skein} without any further check about odd or even. The resulting summation over crossing-free diagrams is identical to the state sum once the states are made irreducible by the reduction rule in the Figure \ref{fig:skein}.% smoothing the even crossings of $K$ by A and B type of smoothing of the usual bracket polynomial, see Figure \ref{fig:skein}, and by labeling the smoothing sites by $A$ and $A^{-1}$, respectively.% Odd crossings of $K$ are replaced by graphical nodes. %We regard the circular and long segment components of a parity state as planar graphs.
\begin{figure}[H]
%\centering  \scalebox{0.5}{\input{trl.pdf_tex}}
 %   \begin{center}
  %   \begin{tabular}{c}
     \centering  \scalebox{0.75}{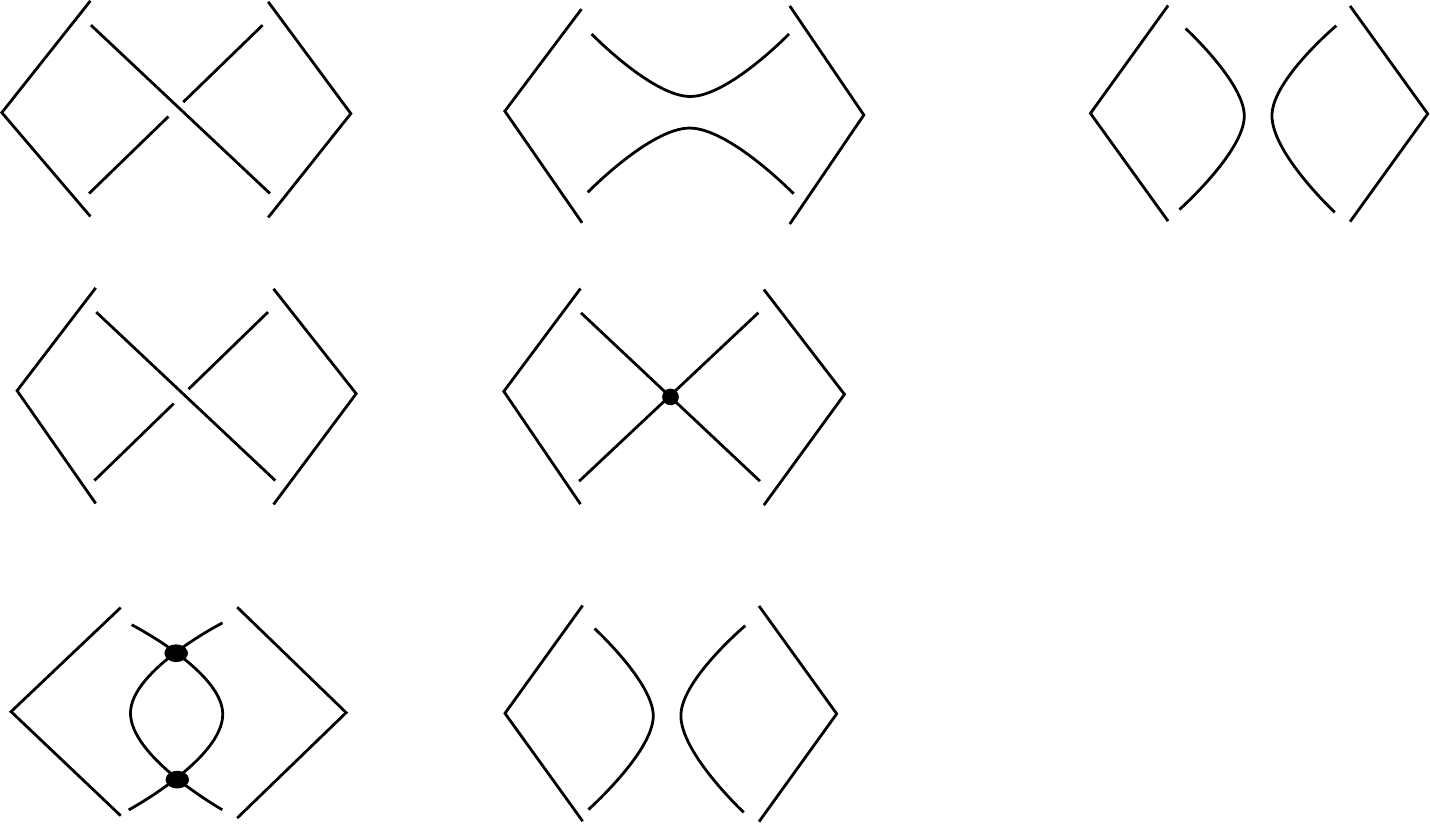}
   %  \end{tabular}
     \caption{Parity bracket expansion}
     \label{fig:skein}
%\end{center}
\end{figure}	
The reduction rule is simply a type of Reidemeister II move that eliminates two graphical nodes forming the vertices of a bigon. Each state component that contains nodes after applying all possible reductions  contributes to the polynomial as a graphical coefficient. Circular or long segment state components without any nodes left, contribute as the value of $-A^2- A^{-2}$. % formed by two nodes corresponding to odd crossings of a knotoid diagram in $S^2$. 
 %The skein expansion in Figure \ref{fig:skein} can be interpreted as follows. 
\begin{definition}\normalfont
The \textit{parity bracket polynomial} of a knotoid diagram $K$ in $S^2$ or in $\mathbb{R}^2$, is defined as
 \begin{center}
$<K>_P = \sum_S A^{n(S)}(-A^2-A^{-2})^{l(S)}G(S)$,
\end{center}
where $n(S)$ denotes the number of $A$-smoothings minus the number of $B$-smoothings, $l(S)$ is the number of components without any nodes, and $G(S)$ is the union of state components containing nodes in the state $S$. 
\end{definition}
The \textit{normalized parity bracket polynomial} of a knotoid diagram $K$ in $S^2$ or in $\mathbb{R}^2$ is defined as
\begin{center}
$P_K = (-A^3)^{-wr(K)}<K>_P$,
\end{center}
where $wr$ is the writhe of $K$.
The normalized parity bracket polynomial is an invariant of planar and spherical knotoids \cite{GK1}. 

 It is shown \cite{GK1} that the isotopy of $S^2$ and the reduction rule eliminate any bigon region. As a result, there is no graphical coefficient in the parity bracket polynomial of a knotoid in $S^2$ \cite{GK1}. As we shall see below, this elimination does not neccessarily happen for knotoids in $\mathbb{R}^2$.
\subsubsection{Parity bracket polynomial of planar knotoids}
For planar knotoids, the parity states are then taken up to planar isotopy rather than isotopy of $S^2$. This makes it possible for some graphical coefficients to survive.
We give an example of two planar knotoids $K$ and $K'$ in Figure \ref{fig:kk} that are distinguished in this way. The knotoid $K$ is listed as the knotoid $2_2$ in the table \cite{Dim} up to an orientation, and $K'$ is the symmetric \cite{Tu} of $K$.  %For our purposes in this note it suffices to just show how they are distinguished by the planar parity bracket.
Both crossings of the planar knotoids $K$ and $K'$ are odd crossings. The planar isotopy classes of the resulting graphs $G$ and $G'$ are the planar parity bracket invariants of 
$K$ and $K'$, respectively. We leave it as an (easy) exercise for the reader to verify that $G$ and $G'$ are not planar isotopic and therefore $K$ and $K'$ are not equivalent planar knotoids. 
\vspace{-.53cm}
\begin{figure}[H]
\centering \includegraphics[width=.47\textwidth]{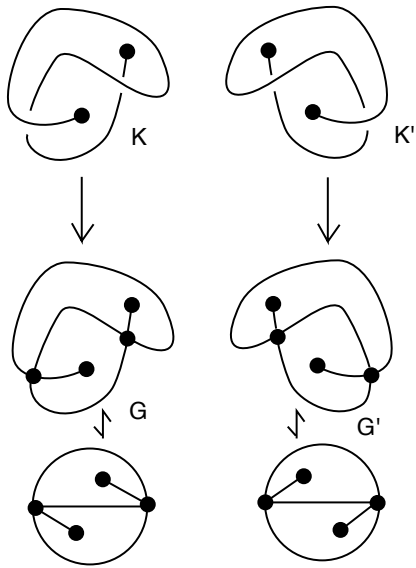}
\caption{Distinguishing symmetric knotoids in the plane}
\label{fig:kk}
\end{figure}
\begin{figure}[H]
\centering \includegraphics[width=.47\textwidth]{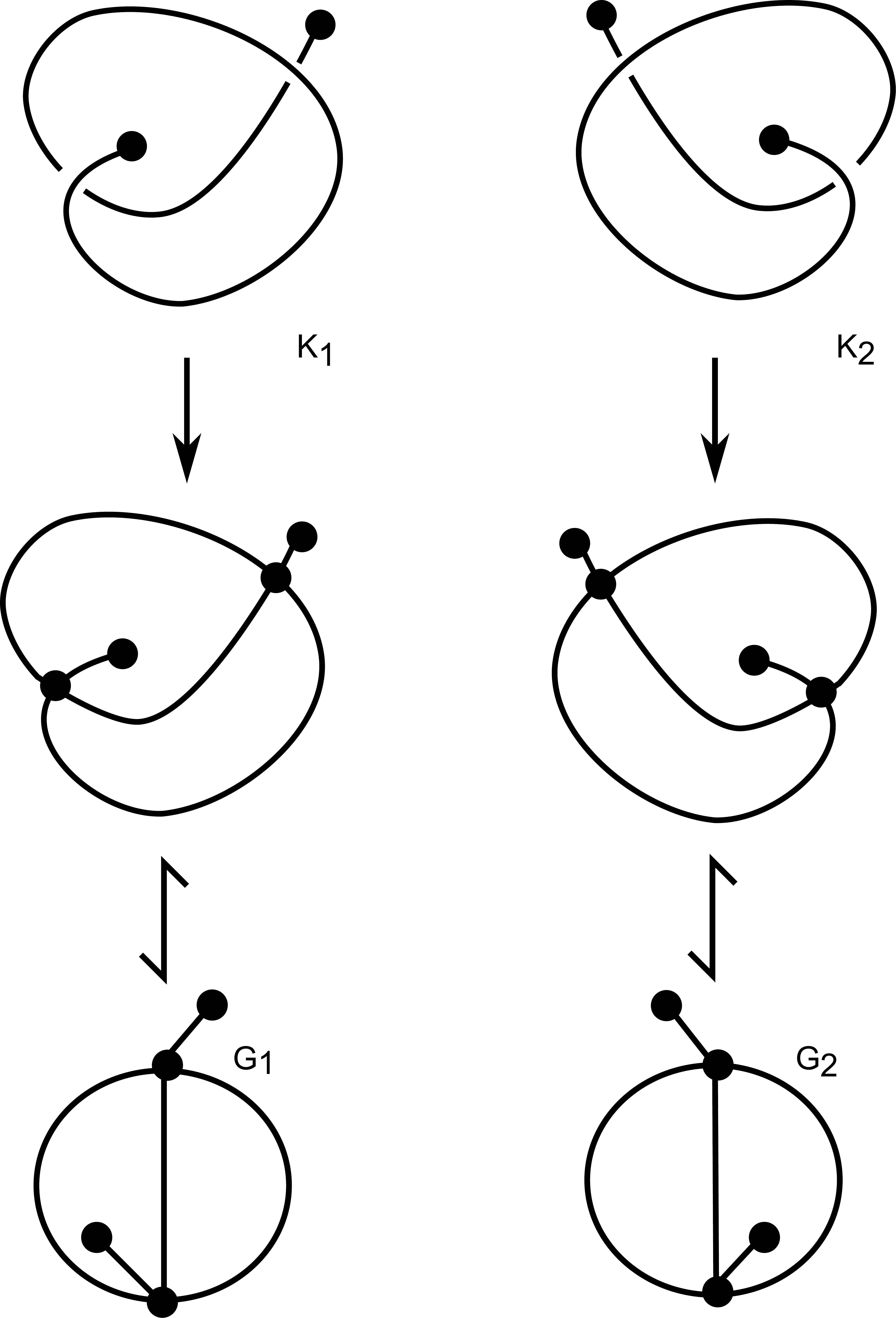}
\caption{}
\label{fig:partwo}
\end{figure}
A further calculation shows that these pairs have the same loop bracket polynomial but are distinguished by the loop arrow polynomial discussed in Section \ref{sec:arrow} when given an orientation.
%%%%%%%%%%%%%%%%%%%%%%%%%%%%%%%%%%%%%%%%5
\section{A connection between knotoids and virtual knots} \label{sec:virtclosure}
 The endpoints of a knotoid diagram $K$ in $S^2$ can be connected with an arc by encoding each intersection of the connecting arc
\begin{wrapfigure}{r}{0.25\textwidth}
  \centering
	\includegraphics[width=0.25\textwidth]{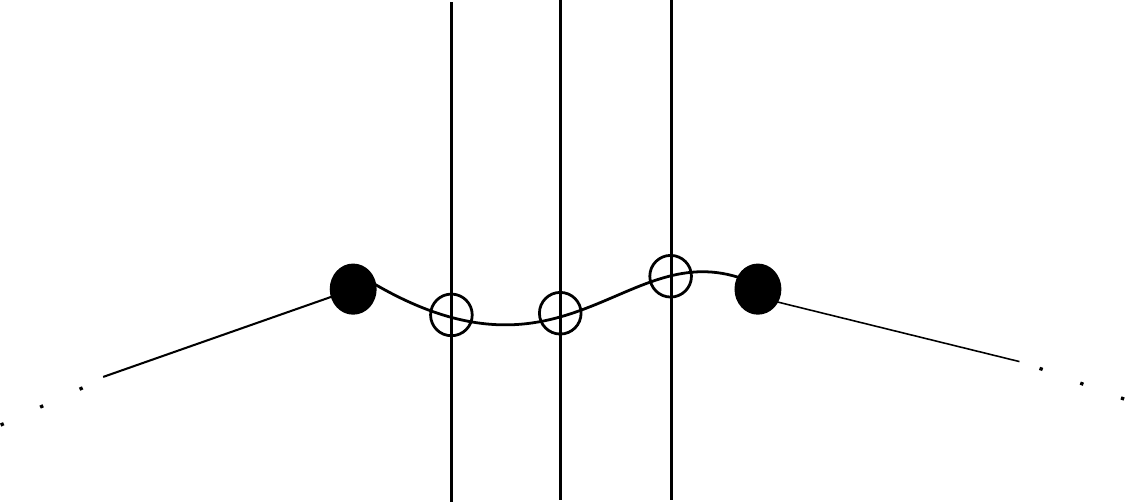}
%\end{center}
  \vspace{-13.5pt}
\caption{\small The virtual closure}
   \vspace{-10pt}
	 \label{fig:vir}
\end{wrapfigure}
with a strand of the diagram as a virtual crossing, see Figure \ref{fig:vir}. The resulting diagram is clearly a virtual knot diagram that can be also represented in a torus. Precisely, two disks around the endpoints of $K$ are cut out from the $2$-sphere of $K$ and a $1$-handle is attached to the resulting annulus by identifying the boundaries via orientation reversing homeomorphisms. The attached handle is assumed to hold the arc connecting the endpoints of $K$. The resulting torus representation of the knot, exemplified in Figure \ref{fig:stnd}, is called the \textit{standard torus representation}.
\begin{figure}[H]
\includegraphics[width=.5\textwidth]{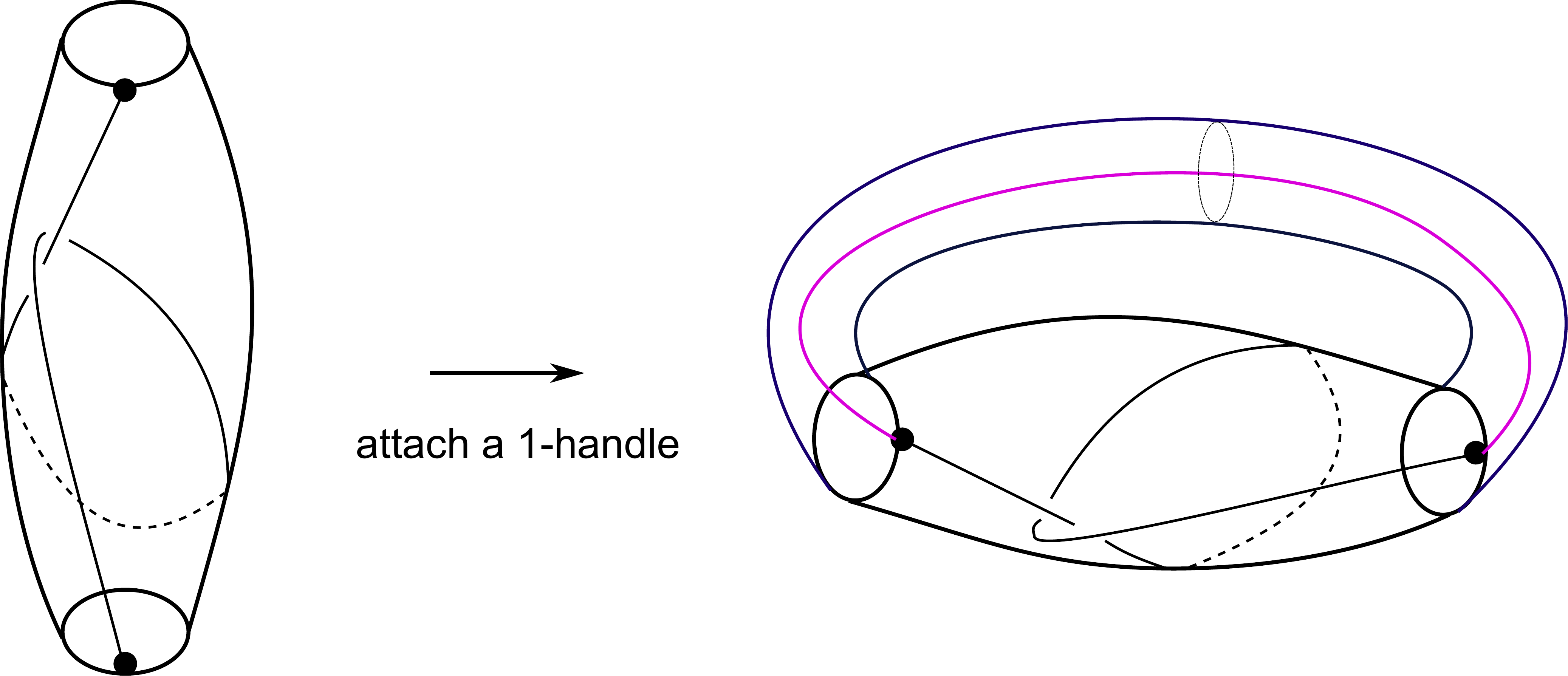}
\caption{Standard closure of a knotoid}
\label{fig:stnd}
\end{figure}
In this way we obtain a well-defined map \cite{Tu,GK1},
\begin{center}
$\overline{v}$: \{Knotoids in $S^2$ \} $\rightarrow$ \{Virtual knots of virtual genus $\leq$ $1$\},
\end{center} 
that is called the \textit{virtual closure map}. Since the map $\overline{v}$ is well-defined, any invariant of virtual knots induces an invariant for knotoids through the virtual closure map. The knotoid invariants obtained by the virtual closure map are stronger than the invariants obtained by classical closures of knotoids since adding virtual crossings does not change the knottedness information of the knotoid \cite{GK1}. 

Clearly the virtual closure of a knot-type knotoid is a classical knot. It follows from Korablev and May \cite{Ko} that the virtual closure will produce a genus $0$ knot only if the knotoid is of knot-type, that is, only if the knotoid has height $0$. The representation of the virtual closure of a knotoid in a thickened torus is the same as Korablev and May's `lifting' of the knotoid. A knot in a thickened torus destabilizes if it can, in our terms, be represented in genus $0$.
\begin{theorem}\cite{Ko}\label{thm:Ko}
A knotoid has height $0$ if and only if its lifting admits destabilization.
\end{theorem}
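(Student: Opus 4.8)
The plan is to prove the two implications separately, routing both through the reformulation that the lifting $\widehat K\subset T^2\times I$ admits destabilization \emph{if and only if} the virtual closure $\overline v(K)$ is a classical knot, i.e.\ has virtual genus $0$. One half of this reformulation is immediate: destabilizing $\widehat K$ yields a genus-$0$ representation of $\overline v(K)$. For the other half, if $\widehat K$ admitted no destabilization it would be a minimal representation, hence \emph{the} minimal representation by Kuperberg's theorem \cite{Ku}, forcing the virtual genus to equal $1$; so a genus-$1$ lifting of a classical knot must destabilize.

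For the implication \emph{height $0$ $\Rightarrow$ destabilization}: a height-$0$ knotoid is a knot-type knotoid, so it has a diagram with both endpoints in a single region; taking that region to be a disk with the endpoints adjacent on its boundary, the underpass-closure arc can be drawn disjoint from every strand, whence the virtual closure of that diagram has no virtual crossings and $\overline v(K)$ is classical. By the reformulation, $\widehat K$ destabilizes.

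For the converse \emph{destabilization $\Rightarrow$ height $0$}, keep the notation of the construction of the lifting: $\mathcal A$ is the planar annulus obtained from $S^2$ by deleting small disks around the two endpoints of $K$; the attached $1$-handle has belt circle $\mu$; and $\Sigma_0=\mu\times I$ is the corresponding vertical annulus, which meets $\widehat K$ transversally in the single point where the closure arc runs over the handle (so $[\widehat K]\cdot[\mu]=\pm1$), and which has the property that cutting $T^2\times I$ along it recovers $\mathcal A\times I$ together with the given diagram of $K$. Suppose $\widehat K$ destabilizes; then, possibly after an ambient isotopy, there is a vertical annulus $A=\gamma\times I$ over an essential simple closed curve $\gamma\subset T^2$ with $A\cap\widehat K=\emptyset$ along which compression lowers the genus. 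Since $\widehat K\subset(T^2\setminus\gamma)\times I$ and $T^2\setminus\gamma$ is an open annulus, $[\widehat K]$ is a multiple of $[\gamma]$; combined with $[\widehat K]\cdot[\mu]=\pm1$ this forces $[\widehat K]=\pm[\gamma]$ and $[\gamma]\cdot[\mu]=\pm1$, so $\gamma$ and $\mu$ form a basis of $H_1(T^2)$. The idea now is to trade $A$ for the handle annulus $\Sigma_0$: put $A$ in minimal position with $\Sigma_0$ — removing any circle of $A\cap\Sigma_0$ by an innermost-disk isotopy (an essential circle would be homologically nontrivial yet bound a disk in the other annulus; the relevant $3$-balls are controlled using irreducibility of $T^2\times I$), and removing boundary-parallel arcs likewise — so that $A\cap\Sigma_0$ becomes a single spanning arc. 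Cutting $T^2\times I$ along $\Sigma_0$ then turns $A$ into a single rectangle in $\mathcal A\times I$, disjoint from the diagram of $K$, which, being disjoint from the knotoid, may be straightened to a vertical rectangle with base a properly embedded arc $\delta\subset\mathcal A$ joining the two boundary circles of $\mathcal A$. Since this rectangle is vertical and misses $\widehat K$, the arc $\delta$ misses the diagram of $K$ entirely; hence $\delta$ is a connecting arc for that diagram crossing no strand, so the diagram has height $0$ and therefore $K$ has height $0$.

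The step I expect to be the main obstacle is exactly this normalization: trading the destabilizing annulus $A$ for the \emph{fixed} handle annulus $\Sigma_0$, and doing so cleanly enough that the rectangle one extracts is genuinely vertical, so that the connecting arc it projects to genuinely crosses no strand (rather than merely few strands). The handle annulus is the only piece of data recording that $\widehat K$ came from a knotoid, so the innermost-disk and innermost-arc surgeries must be carried out on $\Sigma_0$ rather than on $A$ (which is pinned by $A\cap\widehat K=\emptyset$), while keeping control of the unique point of $\Sigma_0\cap\widehat K$ and never increasing $|A\cap\Sigma_0|$; this is the same phenomenon that governs the uniqueness half of Kuperberg's theorem. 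A purely diagrammatic alternative — minimizing the number of virtual crossings of $\overline v(K)$, which on every diagram equals the height of that diagram — meets the identical difficulty, namely that one must show a virtual-crossing-minimal diagram of $\overline v(K)$ can be chosen to be the closure of a knotoid.
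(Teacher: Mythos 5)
You should first be aware that the paper contains no proof of this statement: Theorem \ref{thm:Ko} is quoted verbatim from Korablev and May \cite{Ko}, so there is no internal argument to compare yours against. Judged on its own terms, your proposal gets the easy half and all the bookkeeping right: the reformulation via Kuperberg's theorem, the observation that a height-zero diagram closes with no virtual crossings, and the homological computation $[\widehat K]=\pm[\gamma]$, $[\gamma]\cdot[\mu]=\pm 1$ are all correct.

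The converse, however, has a genuine gap, and it sits exactly where you flagged it. First, the normalization of $A\cap\Sigma_0$ and the straightening of the cut-open rectangle are asserted rather than proved; these are fillable (an innermost circle of $A\cap\Sigma_0$ bounds a disk $D\subset\Sigma_0$ and a disk $D'\subset A$, and since $\widehat K$ misses $A$ and meets $\Sigma_0$ transversally in one point it meets the sphere $D\cup D'$ in at most one point, hence in zero points, so the ball they bound misses $\widehat K$ and you may isotope $A$ --- not $\Sigma_0$ --- across it; and the cut-open rectangle is a meridian disk of the solid torus $\mathcal A\times I$, hence standardly isotopic to a vertical one), but as written they are missing, and your suggestion to perform the surgeries on $\Sigma_0$ is the wrong choice, since moving $\Sigma_0$ destroys the very property (one transverse intersection with $\widehat K$) that encodes the knotoid. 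Second, and more seriously, your conclusion ``so the diagram has height $0$'' proves too much: the particular diagram you started from need not have height zero even when the knotoid does (take any knot-type knotoid presented with its endpoints in different regions). The straightening isotopy cannot be performed rel $\widehat K$; it necessarily drags the arc $\alpha=\widehat K\cap(\mathcal A\times I)$ to a new position, and what you obtain is a \emph{different} diagram of the \emph{same} knotoid admitting the connecting arc $\delta$. To conclude you must therefore invoke the fact that isotopy of such arcs in $\mathcal A\times I$, with endpoints free on the two boundary annuli, induces exactly knotoid equivalence of the projected diagrams --- which is precisely the Korablev--May correspondence between knotoids and geometric-degree-one knots in the thickened torus, i.e., the substance of the result you are trying to reprove. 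Your proof is a reasonable outline, but without these two ingredients it does not yet close.
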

From Theorem \ref{thm:Ko} and the fact that knot-type knotoids (knotoids with height $0$) carry the same topological information as their closures (virtual or classical) we can conclude that there is no proper knotoid that closes virtually to the trivial knot and in fact, only the trivial knotoid closes virtually to the trivial knot. 

It is not hard to show that the virtual closure map is not an injective map. The pair of knotoids given in Figure \ref{fig:nonin} has the same virtual closure. It can be easily verified that the knotoid on the left hand-side is $3$-colorable but the knotoid on the right hand-side is mono-chromatic, so they are non-equivalent knotoids.
\begin{figure}[H]
\includegraphics[width=.5\textwidth]{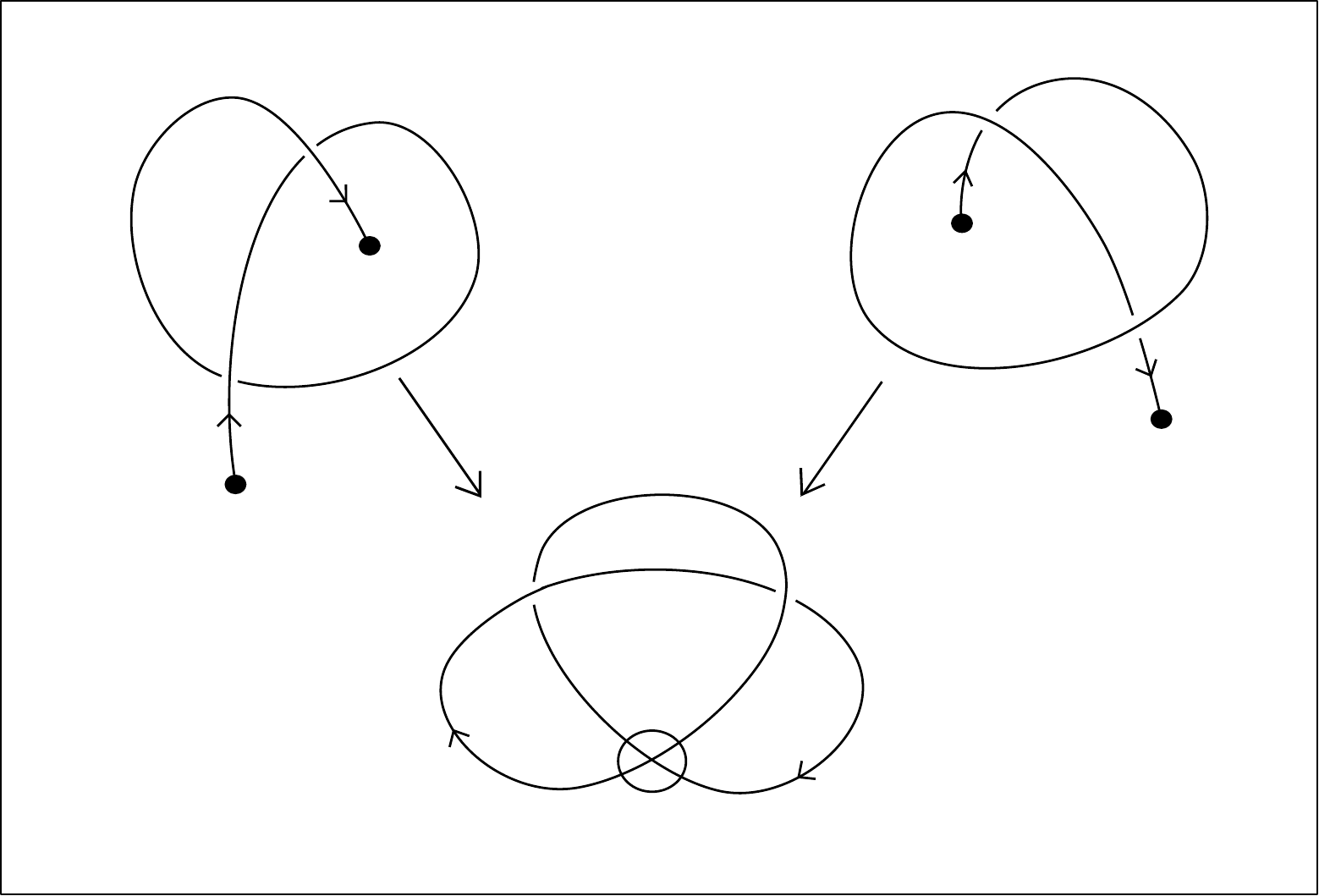}
\caption{A pair of non-equivalent knotoids with the same virtual closure}
\label{fig:nonin}
\end{figure}
\subsubsection{The virtual closure map is not surjective}\label{sec:surj}
We show that the virtual closure map is not surjective by examining the \textit{surface bracket states}. Let $(\Sigma, K)$ be a fixed surface representation of the virtual knot $\kappa$. We apply Kauffman bracket state expansion by smoothing out all crossings of $K$ in $A$ and $B$ type. The resulting collection of disjoint simple closed curves in $\Sigma$ are the \textit{surface bracket states}.
\begin{definition}  \cite{DK1} \normalfont
 The {\it surface bracket polynomial} of $(\Sigma, K)$ is defined as,
%  state components that bound a disk in $\Sigma$ contributes to the polynomial that do not bound a disk 
$$<(\Sigma, K)> = \sum_{(\Sigma, s)} A^{\sigma(s)} d^{|s(c)|} [s(c)],$$
where $\sigma(s)$ is the number of $A$ smoothings minus the number of $B$ smoothings, $|s(c)|$ is the number of components of the state $s$ that bound a disk in $\Sigma$, and $[s(c)]$ is the formal sum of homology classes of components of the state $s$ that do not bound a disk.  
\end{definition}
%Dye and the second listed author \cite{} proved the following minimality theorem for virtual knots by using the surface bracket polynomial. 
%\begin{theorem}
%Let $(\Sigma, K)$ be a surface representation of a virtual knot diagram $K$, and $\Sigma=T_{1} T_{2} T_{3} T_{4}$
%\end{theorem}
\begin{definition} \normalfont
A surface bracket state has \textit{multiplicity one} if it has only one component that does not bound a disk.
\end{definition}
In Figure \ref{fig:surface} we verify that each surface bracket state of the standard representation of the virtual closure of the knotoid $K$ given in Figure \ref{fig:surface} has multiplicity one.
\begin{figure}[H]
\centering
\includegraphics[width=1\textwidth]{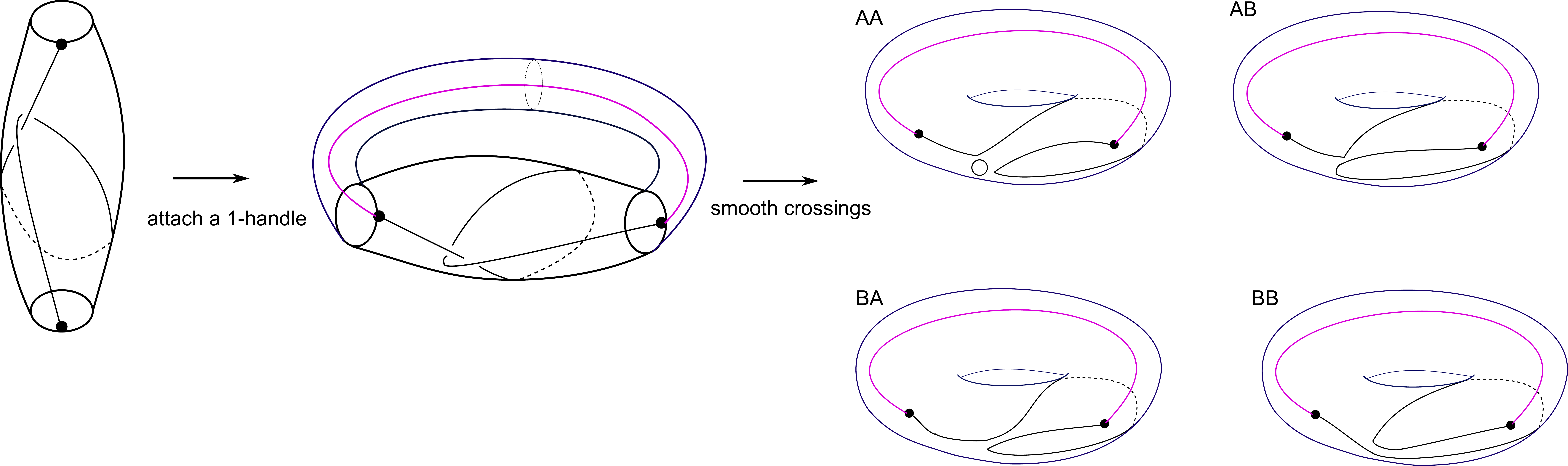}
\caption{Surface bracket states of the standard representation of $\overline{v}(K)$}
\label{fig:surface}
\end{figure}
\begin{lemma}\label{lem:mone}
Given any standard torus representation of a virtual knot lying in the image of the virtual closure map.
Each homologically nontrivial surface bracket state of such knot diagram in the torus has multiplicity one. In fact they can be only of the form $[\lambda] + k[\mu]$, where $[\lambda], [\mu]$ are the longitudinal and the meridional generators of $H_1(T^2)$, respectively, and $k \in \mathbb{Z}$. 
\end{lemma}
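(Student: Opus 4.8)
The plan is to exploit the rigid structure of a standard torus representation: the torus is built from the annulus that carries the knotoid arc together with a handle that carries only the closing arc, so that all bracket smoothings take place inside the annulus. Concretely, write $T^2 = A \cup_{\partial} H$, where $A = S^2 \setminus (D_0 \sqcup D_1)$ is the annulus obtained by removing small disks around the two endpoints of a diagram of a knotoid $K$ with $\overline{v}(K) = \kappa$, and $H \cong S^1\times D^1$ is the handle glued to $A$ along the circles $c_0 = \partial D_0$ and $c_1 = \partial D_1$. Inside $A$ the diagram of $\kappa$ is a single arc $\delta$ from a point $p_0\in c_0$ to a point $p_1\in c_1$ carrying all of the classical crossings, and the closing arc $\gamma$ is a cocore of $H$ running once across the handle from $c_0$ to $c_1$. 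Choose the basis of $H_1(T^2)$ in which $[\mu]$ is the class of the core circle of $A$ (equivalently of $c_0$ or $c_1$) and $[\lambda]$ is the class of a simple closed curve crossing $H$ exactly once; then $[\lambda]\cdot[\mu]=\pm1$, and any essential simple closed curve contained in $A$ is isotopic in $A$ to the core, hence has class $\pm[\mu]$ in $T^2$. This is the labelling for which the conclusion will read $[\lambda] + k[\mu]$.

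Next I would analyse an arbitrary surface bracket state $s$. Since every classical crossing of $\kappa$ lies on $\delta\subset A$, resolving all of them is a sequence of local moves inside $A$, turning $\delta$ into a compact $1$-manifold in $A$ with boundary $\{p_0,p_1\}$; such a manifold has exactly one arc component, which joins $p_0$ to $p_1$, the remaining components being disjoint simple closed curves $e_1,\dots,e_r$ in $A$. Therefore, inside $T^2$, one has $s = (\delta'\cup\gamma)\ \sqcup\ e_1 \sqcup\cdots\sqcup e_r$, where $\delta'$ is the resolved arc from $p_0$ to $p_1$ and $\delta'\cup\gamma$ is a single simple closed curve (the closed-up knot).

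The key step is to show that every $e_j$ bounds a disk in $T^2$. Each $e_j$ is a simple closed curve in the annulus $A$, so it either bounds a disk in $A$ (hence in $T^2$) or is essential in $A$. In the essential case $e_j$ separates $A$ into two annuli, one containing $c_0$ and one containing $c_1$; but $\delta'$ is a connected arc in $A$ from $p_0\in c_0$ to $p_1\in c_1$ that is disjoint from $e_j$ (the resolution produces pairwise disjoint pieces), and a connected set meeting both complementary regions of $e_j$ must meet $e_j$ — a contradiction. Hence every $e_j$ is inessential and homologically trivial, so $\delta'\cup\gamma$ is the only component of $s$ that can fail to bound a disk; that is, $s$ has multiplicity one. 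I expect this separation observation to be the only genuine content of the argument; everything else is bookkeeping about $1$-manifolds in an annulus and about $H_1(T^2)$.

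Finally I would pin down the class of $\delta'\cup\gamma$. Since $\gamma$ lies in $H$ it is disjoint from the core circle of $A$, while $\delta'$ runs from $c_0$ to $c_1$, i.e.\ from one side of that core circle to the other, so the algebraic intersection number of $\delta'\cup\gamma$ with the core circle is $\pm1$, i.e.\ $[\delta'\cup\gamma]\cdot[\mu]=\pm1$. Writing $[\delta'\cup\gamma]=a[\lambda]+b[\mu]$ and using $[\lambda]\cdot[\mu]=\pm1$ and $[\mu]\cdot[\mu]=0$ forces $a=\pm1$, so after orienting the curve suitably $[\delta'\cup\gamma]=[\lambda]+k[\mu]$ for some $k\in\mathbb{Z}$; in particular $\delta'\cup\gamma$ is homologically nontrivial. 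Together with the previous paragraph this says exactly that each homologically nontrivial surface bracket state of the standard torus representation of $\kappa$ has multiplicity one, the single nontrivial component being of the form $[\lambda]+k[\mu]$, which is the assertion of the lemma.
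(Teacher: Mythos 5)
Your proposal is correct and follows essentially the same route as the paper: identify the state as one long component (the resolved arc joined to the closure arc in the handle) plus circles in the annulus, observe that the arc joining the two boundary circles of the annulus forces every other component to be inessential and hence to bound a disk, and compute the class of the long component as $[\lambda]+k[\mu]$. You merely make explicit two points the paper asserts more briefly — the separation argument replacing the paper's appeal to the Jordan curve theorem, and the intersection-number computation pinning the $[\lambda]$-coefficient to $\pm1$.
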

\begin{proof}
In a standard representation of a virtual knot lying in the image of the virtual closure map, there is an arc attached to the ends of the knotoid that goes around the handle added to the $2$-sphere minus two disks neighboring the endpoints. This is the closure arc. The endpoints of the knotoid are on the two boundary components of the annulus. In any surface state of the closure the two endpoints are connected by the long segment component. With the closure arc that goes around the attached handle, the long segment component forms a non-bounding component in the torus which is of the form $[\lambda] + k[\mu]$, where $k \in \mathbb{Z}$. Furthermore, the long segment component cuts the annulus into two disks by the Jordan curve theorem. As a result all remaining components bound disks in the annulus and hence in the torus. 
\end{proof}
%\vspace{-2cm}
\begin{theorem}\label{thm:virtual}
Let $\kappa$ be a virtual knot that lies in the image of the virtual closure map, and $(T, K)$ be any torus representation of $\kappa$. Then each homologically nontrivial surface bracket state of $(T, K)$ has multiplicity one.
\end{theorem}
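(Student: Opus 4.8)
The plan is to derive Theorem~\ref{thm:virtual} from Lemma~\ref{lem:mone} via two observations: first, that the property ``every homologically nontrivial surface bracket state has multiplicity one'' is invariant under homeomorphisms of the ambient surface; and second, that by Kuperberg's theorem any torus representation of a virtual knot of virtual genus $1$ is homeomorphic to the standard one, which Lemma~\ref{lem:mone} already handles.

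First I would record the homeomorphism invariance. Let $h\colon T\to T'$ be a homeomorphism carrying a diagram $K\subset T$ onto a diagram $K'\subset T'$. Then $h$ sends each real crossing of $K$ to a real crossing of $K'$ and carries the $A$- and $B$-type smoothings at a crossing to the corresponding smoothings at its image, so smoothing all crossings of $K$ and then applying $h$ yields the same family of curves as applying $h$ and then smoothing $K'$. Hence $s\mapsto h(s)$ is a bijection from the surface bracket states of $(T,K)$ to those of $(T',K')$. Since $h$ is a homeomorphism, a component of a state $s$ bounds a disk in $T$ if and only if its image bounds a disk in $T'$; equivalently $h$ matches the homologically nontrivial components of $s$ with those of $h(s)$. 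Therefore $s$ and $h(s)$ have the same number of non-bounding components, so $(T,K)$ has the multiplicity-one property exactly when $(T',K')$ does.

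Next I would carry out the reduction. Let $(T,K)$ be any torus representation of $\kappa$. Since $\kappa$ lies in the image of the virtual closure map, its virtual genus is at most $1$; assume it is exactly $1$. Then the genus-one representation $(T,K)$ admits no destabilization, for a destabilization would give a genus-zero representation of $\kappa$; thus $(T,K)$ is a minimal representation. The standard torus representation $(T_{\mathrm{st}},K_{\mathrm{st}})$ of $\kappa$ exists because $\kappa=\overline v(k)$ for some knotoid $k$, and it too is a genus-one representation admitting no destabilization, hence also minimal. By Kuperberg's theorem the minimal representation is unique up to orientation-preserving homeomorphism, so there is a homeomorphism $(T_{\mathrm{st}},K_{\mathrm{st}})\to(T,K)$. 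By Lemma~\ref{lem:mone} every homologically nontrivial surface bracket state of $(T_{\mathrm{st}},K_{\mathrm{st}})$ has multiplicity one, so by the previous paragraph the same holds for $(T,K)$.

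The step I expect to require the most care is the case where $\kappa$ has virtual genus $0$, i.e. $\kappa$ is classical and $k$ is a knot-type knotoid: here a genus-one representation is never minimal, so Kuperberg's uniqueness theorem does not apply to it directly. In this case ``torus representation'' is to be understood as the standard torus representation coming from the knot-type knotoid $k$, and Lemma~\ref{lem:mone} then applies verbatim. (For an arbitrary genus-one representation of a classical $\kappa$ one would isotope $K$ off an essential compressing curve, so that $K$ lies in an annular subsurface $\mathcal A\subset T$ and every state curve either bounds a disk in $\mathcal A$, hence in $T$, or is core-parallel; but controlling the number of core-parallel curves in a single state is exactly where the hypothesis that $K$ comes from a knotoid, rather than an arbitrary knot in the annulus, must be used.) In the substantive case, virtual genus $1$, the only ingredients are Kuperberg's uniqueness theorem, Lemma~\ref{lem:mone}, and the elementary naturality of the surface bracket expansion; everything else is bookkeeping.
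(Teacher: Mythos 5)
Your proposal is correct and follows essentially the same route as the paper: apply Lemma~\ref{lem:mone} to the standard torus representation and transport the multiplicity-one property to an arbitrary torus representation via Kuperberg's uniqueness of minimal representations. You are in fact more careful than the paper's own two-line proof, which silently assumes the virtual genus is one; your flagging of the genus-zero case (classical $\kappa$, i.e.\ knot-type knotoid), where a torus representation is not minimal and Kuperberg's theorem does not directly apply, identifies a point the paper does not address.
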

\begin{proof}
By Lemma \ref{lem:mone}, any surface bracket state of a standard representation of $\kappa$ has multiplicity one. This property is true for any torus representation of the knot $\kappa$ since $\kappa$ has virtual genus one and minimal representations of $\kappa$ are related to each other by an orientation preserving homeomorphism of the torus by the Kuperberg theorem. 
 \end{proof}
\begin{corollary}
Let $\kappa$ be a virtual knot of genus one and $(T^2, k)$ denote a torus representation of $\kappa$. As a corollary of Theorem \ref{thm:virtual}, if at least one of the nontrivial surface state components of $(T^2, k)$ is of the form  (for some choice of orientation) $a[\lambda]$ or $b[\mu]$ for some $a, b \in \mathbb{Z}-\{0\}$, $|a| , |b| \neq 1$, that is, it is of some non-trivial multiplicity, then $k$ is not in the image of $\overline{v}$.
\end{corollary}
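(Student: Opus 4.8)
The plan is to argue by contraposition directly from Theorem \ref{thm:virtual}. Suppose, for contradiction, that $\kappa$ does lie in the image of $\overline{v}$. Since $\kappa$ has virtual genus one, the torus representation $(T^2,k)$ is automatically a \emph{minimal} representation: a genus-one diagram cannot destabilize without dropping below the minimal genus. Hence Theorem \ref{thm:virtual} applies to $(T^2,k)$ itself, and every homologically nontrivial surface bracket state of $(T^2,k)$ has multiplicity one.

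Next I would identify the homology class carried by the unique non-bounding component of such a state. By Lemma \ref{lem:mone}, in the \emph{standard} torus representation this component represents $[\lambda]+k[\mu]$ for some $k\in\mathbb Z$, which is a primitive element of $H_1(T^2)\cong\mathbb Z^2$. By the Kuperberg theorem, $(T^2,k)$ is obtained from the standard representation by an orientation preserving homeomorphism of $T^2$; such a homeomorphism carries surface bracket states to surface bracket states and induces an automorphism of $H_1(T^2)$, which preserves primitivity. Therefore, in \emph{any} torus representation of a knot in the image of $\overline{v}$, the unique non-bounding component of a homologically nontrivial state represents a primitive class.

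Finally I would extract the contradiction. A class of the form $a[\lambda]$ with $a\in\mathbb Z\setminus\{0\}$, $|a|\neq 1$ (respectively $b[\mu]$ with $|b|\neq 1$) fails to be primitive; equivalently, if it is realized by a surface bracket state it must be split among $|a|$ (respectively $|b|$) parallel non-bounding curves, so the state does not have multiplicity one. In either reading this contradicts the conclusion of the previous paragraph. Hence such a torus representation cannot occur for a knot in the image of $\overline{v}$, and $k$ is not in the image of $\overline{v}$.

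I do not expect a genuine obstacle here, since the statement is a contrapositive of Theorem \ref{thm:virtual}; the only point requiring care is bookkeeping. One must be explicit that ``multiplicity of a state'' and ``non-primitivity of the homology class of a component'' are two manifestations of the same obstruction, and that it is precisely the change-of-basis automorphism supplied by the Kuperberg homeomorphism that transports the primitivity conclusion of Lemma \ref{lem:mone} from the standard representation to the arbitrary torus representation $(T^2,k)$. With those two remarks in place the corollary is immediate.
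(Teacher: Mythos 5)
Your argument is correct and is essentially the paper's: the corollary is stated without proof precisely because it is the contrapositive of Theorem \ref{thm:virtual}, and your contraposition (plus the observation that a class $a[\lambda]$ or $b[\mu]$ with $|a|,|b|\neq 1$ forces more than one non-bounding component, hence multiplicity greater than one) is exactly the intended reading. Your extra remarks on primitivity and the Kuperberg homeomorphism merely recapitulate the proofs of Lemma \ref{lem:mone} and Theorem \ref{thm:virtual} and are not needed beyond citing that theorem.
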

Now we can verify that the virtual knot with a torus representation given in Figure \ref{fig:stt}, is not the virtual closure of any knotoid in $S^2$. We observe that the isotopy classes of the homologically non-trivial surface state curves are of the form (or homologous to) $[2a]$ and $[2b]$, up to a choice of orientation, meaning that, they have non-trivial multiplicity. Therefore, by Theorem \ref{thm:virtual} we deduce that $K$ does not lie in the image of the virtual closure map.
\begin{figure}[H]
\centering \includegraphics[width=.85\textwidth]{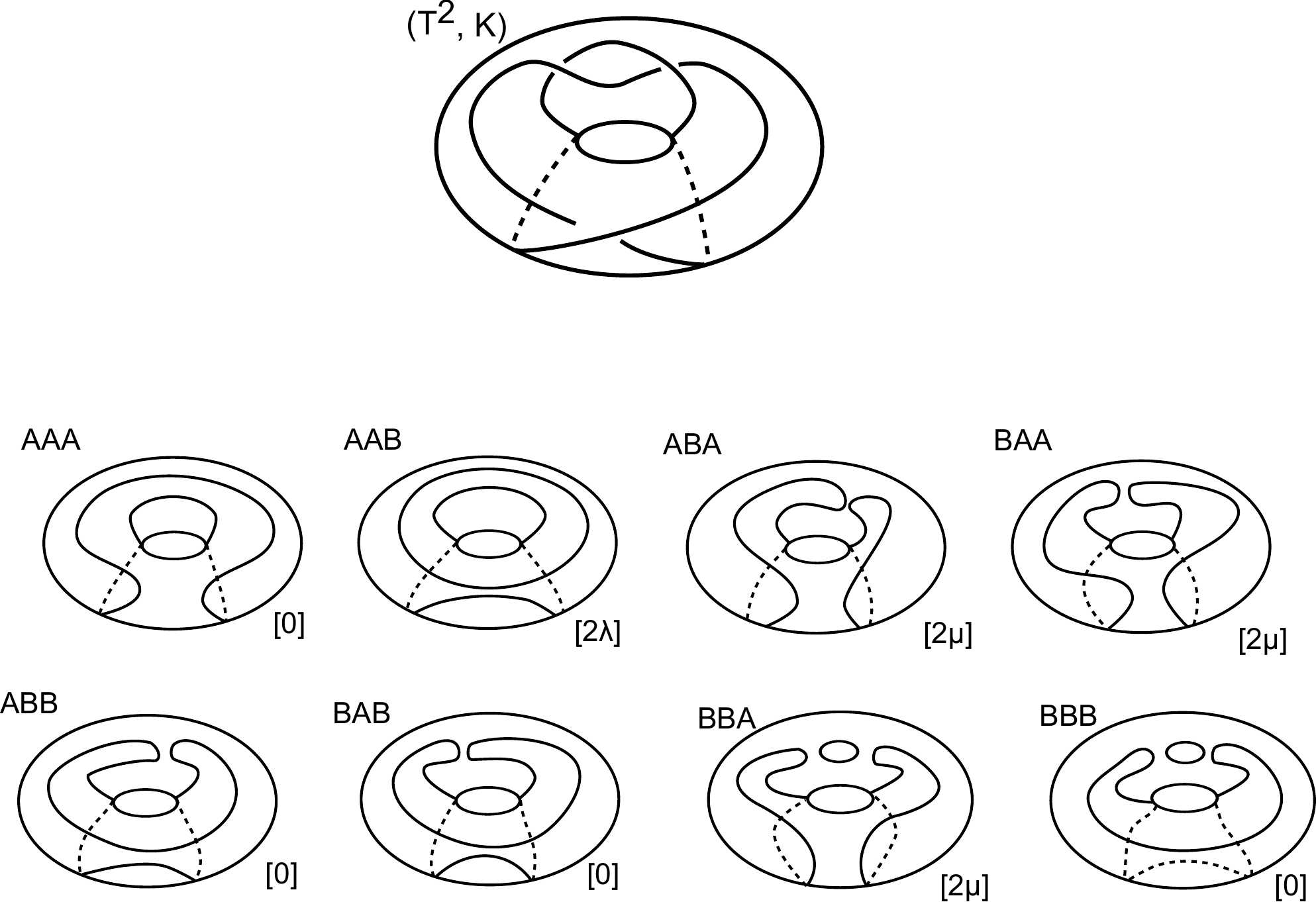}%
\caption{Surface state curves of $K$}
\label{fig:stt}
\end{figure}
%\vspace{-2cm}
\begin{definition}\normalfont
Let $K$ be a classical knot diagram. {\it Virtualization} of $K$ at a single (classical) crossing is defined to remove a $(2,2)$-tangle containing a crossing and replace it with a virtual $(2,2)$-tangle that contains the crossing switched and flanked with two virtual crossings. See Figure \ref{fig:virtualization}.
\end{definition}
\begin{figure}[H]
 % \centering \scalebox{0.4}	{\input{virtlz.pdf_tex}}
\centering \includegraphics[width=.6\textwidth]{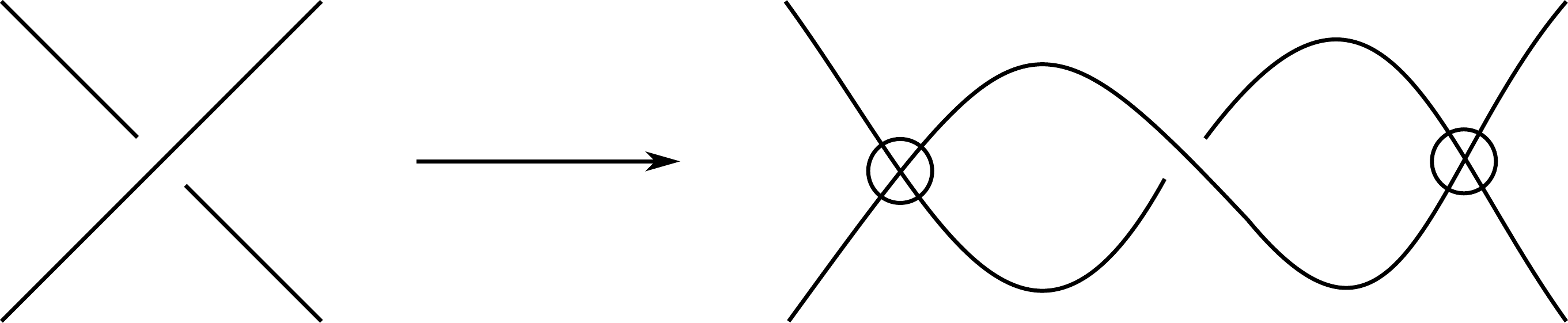}
\caption{Virtualization at a single crossing}
\label{fig:virtualization}
\end{figure}
\vspace{-0.47cm}
 The resulting diagram $K_v$ is a virtual knot diagram that can be represented also in a torus \cite{Ka1}. Notice that the virtual knot diagram given in Figure \ref{fig:stt} can be regarded as the virtualization of the trefoil at a single crossing.

 The second listed author showed \cite{Ka1} that there is an infinite collection of virtual knots whose Jones polynomial are trivial. This collection is obtained by virtualizing a sequence of unknotting crossings. For some time it was an open problem in virtual knot theory where a virtual knot obtained in this way could be virtually equivalent to a classical knot. In the paper \cite{DKK} it was shown, 
by using Khovanov homology for virtual knots, that no virtual knots obtained in this fashion are classical. Silver and Williams proved the following theorem for a specific class of virtualized knots by using a special feature of classical knot diagrams observed in the Wirtinger presentation of the knot group \cite{SiWi}.
\begin{theorem}(Silver and Williams)\cite{SiWi}\label{thm:Si}
Let $K$ be a non-trivial classical knot diagram and $v$ is an unknotting crossing. If $K_v$ is the virtual knot obtained by virtualizing the crossing $v$ then $K_v$ is a non-classical and non-trivial virtual knot. 
\end{theorem}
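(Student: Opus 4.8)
The plan is to read off, from the Wirtinger presentation, an Alexander-type polynomial that vanishes identically on every classical diagram, to compute it on $K_v$, and to show it is nonzero; a single such computation then excludes $K_v$ both from being classical and from being the unknot.

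First I would set up the generalized (two-variable) Alexander invariant of a virtual knot. From a diagram one keeps the arcs of the Wirtinger presentation of the knot group, but at each classical crossing one records the incident arcs over $R=\mathbb{Z}[s^{\pm1},t^{\pm1}]$, the extra variable encoding the over/under asymmetry introduced by virtual crossings; applying Fox calculus to these relations produces a square presentation matrix over $R$, whose determinant $\Delta_\kappa(s,t)\in R$, taken up to units of $R$, is a virtual knot invariant. The feature of \emph{classical} diagrams I would exploit is that for such a diagram this matrix is singular --- the classical Wirtinger relations admit a nonzero kernel vector --- so that $\Delta_\kappa(s,t)\doteq 0$; in particular the unknot has $\Delta\doteq 0$. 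Hence it suffices to prove $\Delta_{K_v}(s,t)\not\doteq 0$.

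Next I would compute $\Delta_{K_v}$, using that virtualization is a local move. On the Gauss diagram, virtualizing the crossing $v$ reverses the chord at $v$ and keeps its sign, so on the Fox-calculus data every crossing block coincides with the one for $K$ except the block at $v$, which is replaced by the block in which the over- and under-strands at $v$ have been interchanged. Since $v$ is an unknotting crossing of $K$, the diagram obtained by honestly switching $v$ is an unknot diagram, so $\Delta_{K_v}$ is the determinant of that already singular matrix with a single small block altered. Expanding along the altered block rewrites $\Delta_{K_v}$ as a combination of minors of the unknot matrix, and I would pin down the surviving term using that $K$ is a \emph{non-trivial} knot --- for instance through the one-variable specialization, which recovers a non-trivial classical Alexander polynomial of $K$. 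This is the only place where the hypothesis that $K$ is non-trivial is used. Carrying the computation through, at least for the subfamily of virtualized knots for which this minor is manifestly nonzero, gives $\Delta_{K_v}\not\doteq 0$ and hence the theorem.

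The step I expect to be the real obstacle is the last one: after altering a single crossing block one must show that the resulting correction term does not vanish in $R$ and is not swallowed by the units $\pm s^a t^b$, and it is precisely the difficulty of ensuring this uniformly that forces the statement to speak of ``a specific class'' of virtualized knots rather than all of them. For the \emph{non-classical} half of the conclusion alone there is a shorter route that avoids the Wirtinger presentation: since $K_v$ has an explicit torus representation, one checks directly --- just as in Theorem~\ref{thm:virtual} and the example of Figure~\ref{fig:stt} --- that it carries a homologically essential surface bracket state, which a classical knot cannot.
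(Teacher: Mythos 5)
First, be aware that the paper does not prove this statement at all: it is imported verbatim from Silver and Williams \cite{SiWi} and used only as an external input (to certify genus one in Corollary~\ref{cor:genusone}), so there is no internal proof to compare yours against. That said, the tool you reach for --- a two-variable Alexander-type determinant extracted from a Wirtinger-style presentation, vanishing identically on classical diagrams --- is exactly the kind of ``special feature of classical knot diagrams observed in the Wirtinger presentation'' that the paper attributes to \cite{SiWi}, so the choice of invariant is the right one in spirit.

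As a proof, however, the proposal has two genuine gaps beyond the admitted incompleteness of the final determinant computation. First, you route the hypothesis that $K$ is non-trivial entirely through ``a non-trivial classical Alexander polynomial of $K$'' and state that this is the only place non-triviality is used. That cannot yield the theorem as stated: there are non-trivial knots with an unknotting crossing and trivial Alexander polynomial (untwisted Whitehead doubles, whose clasp crossing unknots them), and for these your surviving minor has no reason to be nonzero. Your retreat to ``the subfamily of virtualized knots for which this minor is manifestly nonzero'' concedes the point, but the statement is asserted for \emph{every} non-trivial $K$ and \emph{every} unknotting crossing; any correct argument must use a feature of the Wirtinger presentation that detects all non-trivial classical knots (ultimately the non-abelianness of the knot group), not only those seen by the Alexander polynomial. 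Second, the proposed shortcut for non-classicality is wrong as stated: possessing a homologically essential surface bracket state in some torus representation does not preclude being classical --- the unknot embedded as the core circle of the thickened torus has a single state, and it is essential. What the surface bracket states can obstruct is \emph{destabilization} of the given representation, and for that one needs the stronger criterion of \cite{DK1} (no essential simple closed curve in the torus is disjoint, up to isotopy, from all essential state components), after which Kuperberg's theorem gives genus one and hence non-classicality. Note also that the paper's own logic runs in the opposite direction: Theorem~\ref{thm:Si} is used to establish genus one \emph{before} the surface-state analysis of Theorem~\ref{thm:image} is applied, so Theorem~\ref{thm:virtual} and Figure~\ref{fig:stt} cannot be cited as already containing a proof of non-classicality.
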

Theorem \ref{thm:virtual} applies to virtual knots of genus one obtained by virtualization at a single crossing. 
\begin{theorem}\label{thm:image}
Let $K$ be a non-trivial classical knot diagram and $v$ be a (classical) crossing on it. If the virtual knot $K_v$ obtained by virtualizing the crossing $v$ is a genus one virtual knot then $K_v$ does not lie in the image of $\overline{v}$.
\end{theorem}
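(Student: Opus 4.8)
The plan is to play the explicit torus representation of $K_v$ coming from the virtualization against Theorem~\ref{thm:virtual}. Since $K_v$ is assumed to have virtual genus exactly one, Theorem~\ref{thm:virtual} applies to every torus representation of $K_v$, so it is enough to produce \emph{one} torus representation $(T,D_v)$ of $K_v$ together with \emph{one} surface bracket state whose homologically nontrivial part has multiplicity at least two; the contrapositive of Theorem~\ref{thm:virtual} then forces $K_v\notin\operatorname{im}\overline v$. This is precisely the mechanism already exhibited for the virtualized trefoil in Figure~\ref{fig:stt}, and the proposal is to show that the same computation is available for an arbitrary $K_v$.

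First I would fix the torus representation. Starting from the planar diagram $D$ of the classical knot $K$, the virtualization at the crossing $v$ is realized topologically by deleting a small disk around $v$ and attaching a $1$-handle to $S^2$: the two flanking virtual crossings of the virtualization tangle get absorbed into the handle, and $K_v$ becomes a knot $D_v$ on a torus $T$ which agrees with $D$ outside a neighbourhood of $v$ and which near $v$ carries the switched crossing together with arcs of the two strands at $v$ routed over the handle. I would then fix as generators of $H_1(T^2)$ a meridian $[\mu]$ of the handle (a small loop linking the tube, crossing the ``waist'' of the handle once) and a longitude $[\lambda]$.

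Next I would run the Kauffman bracket state sum on $(T,D_v)$: smooth every classical crossing, leave the virtual crossings (the handle) untouched. The key step is to single out those states in which the smoothing at the (switched) crossing $v$ leaves two disjoint arcs running over the handle, one coming from each of the two strands at $v$; the remaining crossings of $D_v$ may be smoothed in any fashion. In such a state each of these two arcs crosses the waist of the handle exactly once, while the rest of each state component lies on the sphere part of $T$ and so meets the waist not at all; hence the two components containing these arcs each have algebraic intersection $\pm1$ with $[\mu]$, are therefore homologically essential, and are moreover distinct (a single embedded simple closed curve on $T$ cannot carry both over‑the‑handle arcs, since it would be homologous to $2[\lambda]$, which is impossible for a simple closed curve). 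Thus every such state has at least two disjoint homologically essential components, i.e.\ multiplicity $\ge 2$, and Theorem~\ref{thm:virtual} then finishes the proof exactly as in the trefoil example.

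The point I expect to be the main obstacle is the geometric claim in the previous paragraph, made \emph{uniformly in $K$ and in the chosen smoothing of the remaining crossings}: one has to check carefully, from the virtualization tangle, that some smoothing at $v$ really does release two independent arcs over the handle and that their state components close up through the sphere part without merging — the orientation and intersection bookkeeping with $[\mu]$ near the handle is where the care is needed. By contrast, the non‑triviality hypothesis on $K$ plays no role in the state computation itself; what is genuinely essential is that $K_v$ have virtual genus exactly one, since that is what licenses the use of Theorem~\ref{thm:virtual} (and, behind it, the Kuperberg uniqueness theorem).
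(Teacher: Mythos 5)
Your overall strategy is the same as the paper's: fix the evident torus representation of $K_v$ (a handle at the virtualized crossing, the rest of $K$ on the sphere part), exhibit a surface bracket state with at least two homologically essential components, and invoke Theorem~\ref{thm:virtual} (which, as you correctly note, is what uses the genus-one hypothesis via Kuperberg, while nontriviality of $K$ plays no role in the state computation). The paper simply reads the required states off Figure~\ref{fig:virtualized}, observing that the homologically nontrivial states are of the forms $2[\lambda]$ and $2[\mu]$. However, the step you yourself flag as the main obstacle is exactly where your sketch breaks down, and in two ways. First, the claim that the two over-handle arcs land in distinct state components \emph{for every smoothing of the remaining crossings} is false. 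After smoothing, the exterior of the virtualization tangle connects the four tangle endpoints by one of the two non-crossing pairings of the four boundary points; if that pairing matches the pairing produced by your chosen smoothing at $v$, the two arcs lie in two components, but if it is the other pairing they are concatenated into a single component. Moreover, one cannot always escape by varying the exterior state while keeping the smoothing at $v$ fixed: there are knot diagrams whose exterior tangle realizes only one pairing in all of its states (e.g.\ when the exterior crossings only create or destroy closed loops).

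Second, the homological argument you give for distinctness does not do what you want. If a single embedded component contained both over-handle arcs, its two intersections with the waist would be forced to cancel --- precisely \emph{because} a class $\pm 2[\lambda]+k[\mu]$ is not realizable by an embedded simple closed curve --- so that component would be \emph{inessential}, not impossible. The true dichotomy is therefore ``two essential components'' versus ``one inessential component,'' and in the second case Theorem~\ref{thm:virtual} yields no contradiction, since the state is then homologically trivial. The repair is to choose the smoothing at $v$ \emph{adaptively}: for each exterior pairing exactly one of the two smoothings at $v$ separates the arcs, and one must then check (this is what Figure~\ref{fig:virtualized} exhibits) that in \emph{both} smoothings the separated arcs close up to essential curves --- longitudinal type (total class $2[\lambda]$) for one smoothing and meridional type (total class $2[\mu]$) for the other --- so that every state either has multiplicity two or is accompanied by such a state. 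Without this two-case analysis the argument can fail to produce any state violating Theorem~\ref{thm:virtual}.
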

\begin{proof}
%By the theorem of Silver and Williams in \cite{SiWi} we know that $K_v$ is a non-trivial and non-classical knot. 
We analyze the surface state curves of the torus representation of $K_v$ depicted in Figure \ref{fig:virtualized}. 
 We observe that homologically non-trivial state curves are only of the form $2[\mu]$ and $2[\lambda]$. That is, they all have non-trivial multiplicity. It follows from Theorem \ref{thm:virtual} that $K_v$ does not lie in the image of the virtual closure map.
\end{proof}
\begin{figure}[H]
\centering \scalebox{.33}{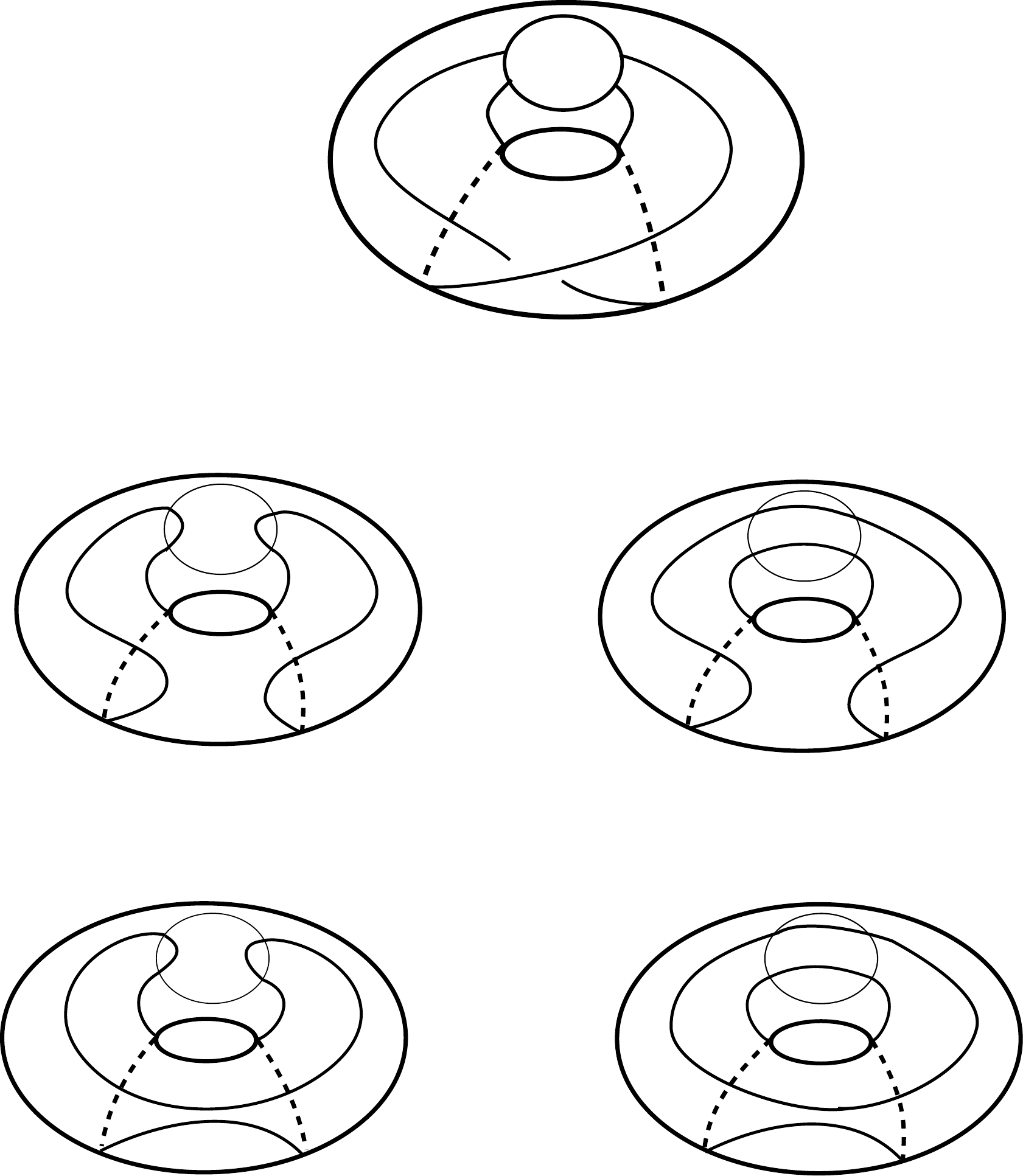}
\caption{The state curves of the virtualization of $K$}
\label{fig:virtualized}
\end{figure}
Corollary \ref{cor:genusone} follows from Theorem \ref{thm:Si} and \ref{thm:image}.

\begin{corollary}\label{cor:genusone}
Let $K$ be a non-trivial classical knot diagram and $v$ be an unknotting crossing. If $K_v$ is the virtual knot obtained by virtualizing the crossing $v$ then $K_v$ is a virtual knot of genus $1$ that does not lie in the image of $\overline{v}$.
\end{corollary}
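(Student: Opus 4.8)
The plan is to assemble the statement from the two results already quoted immediately before it, Theorem~\ref{thm:Si} and Theorem~\ref{thm:image}, the only genuinely new thing to check being that $K_v$ has virtual genus \emph{exactly} one (Theorem~\ref{thm:image} takes this as a hypothesis). So the first step is to record the upper bound: virtualizing a single classical crossing of a diagram in $S^2$ yields a virtual knot diagram that embeds in a torus, since the two virtual crossings flanking the switched crossing are absorbed by one $1$-handle attached to the $2$-sphere. This is the representation $(T^2, K_v)$ drawn in Figure~\ref{fig:virtualized}, and it shows that the virtual genus of $K_v$ is at most $1$.

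Next I would pin the genus down. Because $v$ is an unknotting crossing of the non-trivial classical diagram $K$, Theorem~\ref{thm:Si} says that $K_v$ is non-classical, i.e.\ it is not of virtual genus $0$. Combining this with the upper bound of the previous paragraph, and with the fact that virtual genus is a well-defined non-negative integer (Kuperberg's theorem), forces the virtual genus of $K_v$ to be $1$. Observe that the hypotheses needed here ($K$ non-trivial, $v$ an unknotting crossing) are precisely the hypotheses of the corollary, so nothing extra is assumed.

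With the genus-one hypothesis now verified, Theorem~\ref{thm:image} applies directly to $K_v$: its torus representation $(T^2, K_v)$ has every homologically non-trivial surface bracket state curve of the form $2[\mu]$ or $2[\lambda]$ (as analyzed in the proof of Theorem~\ref{thm:image}, cf.\ Figure~\ref{fig:virtualized}), hence of non-trivial multiplicity, so by Theorem~\ref{thm:virtual} the knot $K_v$ is not in the image of $\overline{v}$. This yields the corollary. The one point that deserves care is not really an obstacle on our side: it is the non-classicality of $K_v$, which is exactly what Theorem~\ref{thm:Si} supplies (ultimately via the Wirtinger-presentation argument of Silver and Williams); everything else in the argument is bookkeeping about the standard torus picture and an elementary inequality on genera.
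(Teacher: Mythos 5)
Your proposal is correct and follows exactly the route the paper intends: the paper simply states that the corollary ``follows from Theorem~\ref{thm:Si} and \ref{thm:image},'' and your write-up supplies the implicit bookkeeping --- the torus representation bounds the virtual genus above by $1$, Silver--Williams non-classicality bounds it below, and Theorem~\ref{thm:image} then applies. This is a faithful (and somewhat more careful) expansion of the paper's one-line argument rather than a different proof.
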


 The discussion about the virtual closure map brings us to the following question: \textit{Is there a way to classify virtual knots of genus $1$ that are in the image of $\overline{v}$?} 

Korablev and May give an answer to this question in \cite{Ko} by using the geometric degree of knots in thickened surfaces.  
\begin{definition}\normalfont
A knot $K$ in the thickened torus $T^2 \times I$ is said to have \textit{geometric degree one} if there is a vertical annulus $A \subset T^2 \times I$ such that $K$ intersects $A$ at only one point. 
\end{definition}
They show that a virtual knot of genus one is in the image of the the virtual closure map (or according to their terminology the {\it lifting map}) if and only if it is a geometric degree one knot. Moreover, the virtual closure map is injective when it is restricted to set of all prime knotoids of height at least two \cite{Ko}.

Then the question we have asked is equivalent to the question: \textit{How do we detect knots in the thickened torus that have geometric degree one?}. It would be interesting to explore invariants of knots in thickened surfaces detecting the geometric degree. Our approach with the surface bracket states to detect the image of the virtual closure map proposes a direct tool for recognizing virtual knots lying in the image of the virtual closure map. Here we conjecture the following.
\begin{conjecture}\normalfont
A virtual knot of genus one lies in the image of the virtual closure map if and only if all of the surface bracket states of any of its torus representations have multiplicity at most one. 
\end{conjecture}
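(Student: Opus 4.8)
The plan is to prove the conjecture by passing through the geometric characterisation of Korablev and May \cite{Ko}: a genus one virtual knot $\kappa$ lies in the image of $\overline v$ precisely when some torus representation $(T^2,K)$ of $\kappa$ can be isotoped so that some vertical annulus $\gamma\times I$ meets $K$ in exactly one point, i.e.\ has geometric degree one. Since $\kappa$ has virtual genus one, every torus representation is minimal, so by the Kuperberg theorem they are all homeomorphic and ``multiplicity at most one for all surface bracket states'' is a well-defined property of $\kappa$. Thus it suffices to prove that $(T^2,K)$ has geometric degree one if and only if every surface bracket state of $(T^2,K)$ has multiplicity at most one. The forward implication is immediate from Theorem \ref{thm:virtual} together with Lemma \ref{lem:mone}: when $\kappa\in\mathrm{im}(\overline v)$, every homologically nontrivial surface bracket state has multiplicity exactly one and the homologically trivial ones have multiplicity zero. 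The real content is the converse.

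For the converse, suppose every surface bracket state of $(T^2,K)$ has multiplicity at most one. First I would identify the homology class $[K]\in H_1(T^2)$. The oriented (Seifert) smoothing at every classical crossing yields one of the $2^n$ bracket states $s_o$ whose components carry coherent orientations, so the oriented sum of their classes equals $[K]$. If $[K]\neq 0$, the hypothesis forces $s_o$ to have a single homologically nontrivial component $c$, which is then a simple closed curve in the primitive class $[K]$, while every other component of $s_o$ bounds a disk in $T^2$; this is consistent with the $[\lambda]+k[\mu]$ description of Lemma \ref{lem:mone}. I would then choose an essential curve $\gamma$ with geometric intersection number one with $c$ and try to isotope $K$ so that it meets $\gamma\times I$ once. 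The knot $K$ is obtained from $s_o$ by re-inserting a local twist at each crossing; since $c$ is essential, every disk-bounding Seifert circle is inessential in the annulus $T^2\setminus c$, and the goal is to corral all these circles together with their connecting twists into a single disk $\Delta\subset T^2$ disjoint from $\gamma$, leaving $K$ parallel to $c$ outside a neighbourhood of $\Delta$; a push-off of $\gamma$ avoiding $\Delta$ then meets $K$ once. Making this precise calls for an induction on the number of crossings in the ``disk part'', using the multiplicity bound for the further-resolved states to forbid a second essential curve appearing at any stage; the base case is exactly the reconstruction of $K$ as the standard closure of the spherical knotoid obtained by cutting $T^2\times I$ along $\gamma\times I$, i.e.\ Korablev--May's lifting picture read backwards.

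The case $[K]=0$ needs separate treatment. A mod $2$ homology count shows that then no surface bracket state can have multiplicity exactly one, so under the hypothesis every state is homologically trivial; one must deduce that $\kappa$ destabilises, contradicting virtual genus one. Equivalently, I would need to show that an essential genus one virtual knot of trivial homology class always carries a surface bracket state of multiplicity at least two. I expect this to be the more delicate point, and suspect it requires a carrier-genus estimate beyond the bracket, for instance a Dye--Kauffman or Khovanov-theoretic bound.

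The main obstacle throughout is the passage from the state level to an ambient isotopy of $K$ in $T^2\times I$: a surface bracket state is produced from $K$ by smoothings, not isotopies, so an annulus meeting $K$ once cannot be read off a single state directly. The leverage is that multiplicity at most one is imposed on \emph{all} states simultaneously, which should make the essential curves of the various states compatible along the state graph and thereby let the disk-bounding material be swept into one disk disjoint from $\gamma$. Converting that compatibility into an honest isotopy, and disposing of the trivial-homology case, are the two places where the argument must do genuine work.
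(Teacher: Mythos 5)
This statement is presented in the paper as a \emph{conjecture}: the paper contains no proof of it, and only one implication is actually established there. Theorem \ref{thm:virtual} (via Lemma \ref{lem:mone}) gives exactly the direction you call immediate --- if a genus one virtual knot lies in the image of $\overline{v}$, then every homologically nontrivial surface bracket state of a torus representation has multiplicity one. The converse is the open content of the conjecture, and your proposal does not close it; you have in effect written a (reasonable) research plan whose two hard steps you yourself flag as unfinished. Your reduction to the Korablev--May geometric degree one criterion is a sensible frame, and your mod $2$ homology observation in the null-homologous case is correct (an essential embedded curve in $T^2$ is primitive, hence nonzero mod $2$, so a state with exactly one essential component cannot occur when $[K]=0$ mod $2$). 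But neither of the two remaining steps is supplied.

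Concretely: (i) for $[K]\neq 0$ you must convert the hypothesis ``every state has at most one essential component'' into an ambient isotopy of $K$ in $T^2\times I$ meeting a vertical annulus once. The multiplicity hypothesis constrains each state separately; it does not by itself force the essential components of different states to be mutually isotopic, nor does it force the disk-bounding Seifert circles and the crossing twists joining them to be sweepable into a single disk disjoint from a transversal curve $\gamma$. A priori the diagram could wind so that every vertical annulus meets $K$ at least three times while each individual state still has a single essential curve; no mechanism is given to exclude this, and ``compatibility along the state graph'' is an intuition, not an argument. (ii) For $[K]=0$ you need that a genus one virtual knot all of whose surface bracket states consist of disk-bounding curves must destabilize. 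This is itself an open-looking claim at least as strong as the conjecture in that case: the Dye--Kauffman surface bracket gives genus \emph{lower} bounds from essential state curves, so when all state curves bound disks it yields no obstruction to genus one, and you offer no substitute. Since the paper records the equivalence only as a conjecture, a complete argument for either (i) or (ii) would be new mathematics, not a reconstruction of anything in the paper.
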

%%%%%%%%%%%%%%%%%%%%%%%%%%%
\section{Minimal diagrams of knot-type knotoids and the conjecture of Turaev}\label{sec:minimal}

Turaev conjectures \cite{Tu} that minimal crossing diagrams of a knot-type knotoid have zero height. We prove this conjecture by using the following theorems by Nikonov and Manturov \cite{Ma} that are based on a parity projection map from virtual knots to classical knots. In the following statement $K_1 < K_2$ means that $K_1$ is obtained by deleting some chords in the Gauss diagram of $K_2$. This means that some classical crossings of $K_1$ are made virtual to obtain $K_2$.
\begin{theorem}(Nikonov)\label{thm:Ni}
There is a map $pr$ from minimal genus virtual knot diagrams to classical knot diagrams such that for every knot $K$ we have $pr(K) < K$ and if two diagrams $K_1$ and $K_2$ are related by a Reidemeister move %(performed within the given minimal genus diagram)
 then their images $pr(K_1)$, $pr(K_2)$ are related to each other by a Reidemeister move.
%Minimal representations of virtual knots admit the minimum number of crossings. 
\end{theorem}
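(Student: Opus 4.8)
The plan is to realise $pr$ as the iterated \emph{Gaussian parity projection} of Manturov. Given a virtual knot diagram $K$, pass to its Gauss diagram and encode the Gaussian parity of a crossing as the parity of the degree of the corresponding chord in the interlacement graph: a chord is odd precisely when it is interleaved with an odd number of other chords. Let $f(K)$ be the virtual diagram obtained by deleting every odd chord, i.e.\ by declaring every odd crossing of $K$ to be virtual. Since $f$ only erases chords, $f(K) < K$ always, with $f(K) = K$ exactly when $K$ is odd-free; in particular $f$ fixes classical diagrams.

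\textbf{Step 1: $f$ respects Reidemeister moves.} If $K_1\to K_2$ is a single Reidemeister move, I claim $f(K_1)$ and $f(K_2)$ are related by a Reidemeister (or detour) move, or coincide. The input is the trio of Gaussian parity axioms — the crossing in an $R1$ is even, the two crossings in an $R2$ have equal parity, and among the three crossings of an $R3$ either none or exactly two are odd — together with the elementary observation that such a move does not alter the interleaving relations among the chords it leaves untouched, so every uninvolved crossing keeps its parity. The cases then close up: the $R1$ chord is isolated in the interlacement graph, hence even and irrelevant to all other parities, so $f$ carries the $R1$ over verbatim; for $R2$ a third chord is interleaved with both members of the pair or with neither, so an even pair survives (and $f$ reproduces the $R2$) while an odd pair is deleted on both sides, giving $f(K_1)=f(K_2)$; the $R3$ cases are analogous, the ``two odd'' case leaving one surviving triangle chord on each side, the two residual diagrams being related by a detour move.

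\textbf{Step 2: iterate to a fixed point.} Since $f$ never increases the crossing number and strictly decreases it whenever an odd crossing is present, the sequence $K, f(K), f^{2}(K),\dots$ stabilises; put $pr(K):=f^{\infty}(K)$, which is odd-free. Applying Step 1 at each stage shows that a Reidemeister move from $K_1$ to $K_2$ yields one (possibly empty) between $pr(K_1)$ and $pr(K_2)$, and $pr(K)<K$ because only chords have been deleted. At this point $pr$ is defined on all virtual diagrams and its values are odd-free.

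\textbf{Step 3: in minimal genus the projection is classical.} The substantive point — and the only use of the minimal-genus hypothesis — is that when $K$ realises the minimal genus of its virtual knot, the odd-free diagram $pr(K)$ has genus $0$. An arbitrary odd-free virtual knot need not be classical, so minimality must be exploited: one works on the minimal abstract supporting surface (legitimate by Kuperberg's uniqueness theorem) and shows that the all-even interlacement data of an odd-free diagram on a surface admitting no destabilisation is incompatible with positive genus — the evenness degenerates the relevant $\mathbb{Z}_2$ intersection form enough that irreducibility of the surface forces the genus to $0$. Granting this deep input, $pr$ sends minimal genus virtual diagrams to classical diagrams, and Steps 1--2 supply the rest. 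I expect Step 1 to be routine but combinatorially lengthy, and Step 3 to carry the real weight of the argument.
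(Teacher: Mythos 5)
The paper offers no proof of this statement: it is quoted from Nikonov/Manturov \cite{Ma} and used as a black box, so there is no internal argument to compare yours against. Judged on its own terms, your proposal has a genuine gap, and in fact the specific construction you chose fails. You build $pr$ from the \emph{Gaussian} parity (a chord is odd iff it interleaves an odd number of other chords) and then, in Step 3, assert that an odd-free diagram of minimal genus must have genus $0$. That assertion is false for the Gaussian parity, and a counterexample sits in this very paper: the virtualization of the trefoil at a single crossing (Figures \ref{fig:stt} and \ref{fig:virtualized}) has the same underlying chord interleavings as the classical trefoil, hence is evenly intersticed --- every crossing is Gaussian-even --- yet it is a non-classical virtual knot of minimal genus $1$. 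Your map $f$ fixes this diagram, so $pr$ returns a non-classical diagram, contradicting the claimed codomain. Steps 1 and 2 (the parity axioms force $f$ to respect Reidemeister moves, and iteration terminates) are essentially correct and are indeed part of Manturov's general projection machinery for \emph{any} parity, but they are the routine part.

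The missing idea is that the theorem requires a finer parity than the Gaussian one, and this is exactly where the minimal-genus hypothesis enters the \emph{definition} of $pr$ rather than appearing only as a final verification. In the Nikonov/Manturov argument one uses the homological parity on the minimal supporting surface $\Sigma$ (well-defined up to homeomorphism by Kuperberg's theorem): a crossing is even iff one of the two loops it determines is trivial in $H_1(\Sigma;\mathbb{Z}_2)$. With that parity, a diagram all of whose crossings are even on a surface admitting no destabilization forces the genus to be $0$, which is the content your Step 3 gestures at with the phrase about the $\mathbb{Z}_2$ intersection form but never establishes --- and cannot establish for the interlacement-graph parity you actually defined. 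To repair the proof you would need to (i) replace the Gaussian parity by the homological parity, (ii) verify the parity axioms for it so that your Steps 1--2 still apply, and (iii) prove the destabilization statement for all-even diagrams; item (iii) carries the real weight and is absent.
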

\begin{theorem}(Manturov)\label{thm:Man}
For every virtual knot diagram $K$ there exists a classical knot diagram $K^*$, such that  $K^* < K$ and $K$ admits the same Gauss diagram with $K^*$ if and only if $K$ is classical. %That is, the genus of the abstract knot diagram surface associated to $K$ is $0$.
\end{theorem}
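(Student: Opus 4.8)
The plan is to obtain $K^{*}$ as the image $pr(K)$ of an iterated \emph{parity projection}, in the spirit of \cite{Ma}. It is worth observing first that the bare existence statement is soft: taking $K^{*}$ to be any sub-Gauss-diagram of $K$ that is maximal among the planarly realizable ones, $K^{*}$ is a classical knot diagram, $K^{*}<K$, and $K^{*}$ carries the full Gauss diagram of $K$ exactly when that diagram is realizable, i.e.\ exactly when $K$ is classical. What is genuinely used in Section~\ref{sec:minimal}, though, is that $K^{*}$ can be produced \emph{canonically} and \emph{compatibly with the Reidemeister moves}, so it is this stronger form I would establish.

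I would begin with the Gaussian parity: a chord $c$ of a Gauss diagram is \emph{odd} if it links an odd number of other chords and \emph{even} otherwise, and verify the parity axioms — the chord created by a first Reidemeister move is even, the two chords of a second Reidemeister move have equal parity, and in a third Reidemeister move the three chords retain their parities with none or exactly two of them odd. I would also record that the Gauss code of a classical knot diagram is evenly intersticed \cite{Ka1} (the classical counterpart of the fact recalled for knotoids above), so every crossing of a classical diagram is even. Now define the projection $P$ by deleting from the Gauss diagram all odd chords. The parity axioms force $P$ to send a Reidemeister move to a Reidemeister move or to an equality: an odd pair created by a second move is erased whole, an even pair survives whole, and in a third move the configuration that survives is again a legal move once the deleted chords are forgotten. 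Hence $P$ descends to virtual knots, $P(K)<K$ always, $P$ fixes every classical diagram (nothing is odd, and a realizable Gauss diagram stays realizable under deletion), and $P(K)$ carries the Gauss diagram of $K$ precisely when $K$ has no odd crossing. Iterating $P$ strictly lowers the chord count until one reaches a diagram with all crossings even — still a sub-diagram of $K$, still reached by a Reidemeister-compatible map.

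The difficulty is that an all-even diagram need not be classical: the Kishino knot is non-trivial, non-classical, and fixed by $P$. One therefore has to continue with the finer parities of the Manturov--Nikonov hierarchy (parities with values in $\mathbb{Z}$, and in the first homology or the group ring of the carrier surface, together with their iterates), and then prove the two facts on which everything rests: (i) a diagram fixed by \emph{every} projection in the hierarchy but still lying on a surface of positive genus admits a further complexity-decreasing move, so that the stable diagram has genus $0$, is classical, and may be taken as $K^{*}=pr(K)$; and (ii) each refined projection still carries Reidemeister moves to Reidemeister moves, so that $pr$ descends to virtual knots and remains the identity on classical diagrams. Fact (ii) is precisely what gives the nontrivial direction of the equivalence: since $pr$ fixes classical diagrams, $K^{*}$ carries the Gauss diagram of $K$ whenever $K$ is classical. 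I expect (i) and (ii) — controlling the carrier genus under the full hierarchy while preserving Reidemeister compatibility — to be the main obstacle, and this is the heart of \cite{Ma} and of Nikonov's construction of $pr$ in Theorem~\ref{thm:Ni}. The converse implication is immediate: if $K^{*}$ and $K$ have the same Gauss diagram and $K^{*}$ is classical, hence realizable, then $K$ is classical.
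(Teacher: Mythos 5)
The first thing to note is that the paper does not prove this statement: Theorem~\ref{thm:Man} is imported verbatim from Manturov \cite{Ma} (as is Theorem~\ref{thm:Ni}) and is used as a black box in Section~\ref{sec:minimal}, so there is no in-paper argument to compare yours against. Judged as a reconstruction of Manturov's proof, your outline has the right shape. Your ``soft'' observation is essentially correct under the reading of ``classical'' that the paper intends --- Corollary~\ref{cor:Man} makes explicit that a diagram is classical when its abstract-diagram surface has genus zero, i.e.\ when its Gauss diagram is realizable --- and you are right that the literal statement is weak while the Reidemeister-functorial form is what the application in Theorem~\ref{thm:conjecture} actually consumes. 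You are also right, and this is the most valuable part of your write-up, that the naive Gaussian-parity projection does not suffice: there exist non-classical diagrams all of whose crossings are even, so the iteration of ``delete odd chords'' can stabilize before reaching genus zero.

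As a proof, however, your proposal is incomplete in exactly the place where the theorem lives. Facts (i) and (ii) --- that a diagram fixed by the whole refined parity hierarchy must have genus zero, and that each refined projection still carries Reidemeister moves to Reidemeister moves --- are not consequences of the setup you describe; they \emph{are} the theorem. Deferring them to ``the heart of \cite{Ma}'' reproduces the architecture of the statement without supplying its content, and the homological or group-ring parities you invoke are defined relative to a carrier surface, so one must additionally check that the projection is compatible with stabilization before it descends to virtual knots. If your aim was a self-contained proof, this is a genuine gap; if your aim was to explain how Manturov's argument is organized and why the easy version of the statement is not the one the paper needs, you have done that accurately. One small caution: your soft construction of $K^*$ as a maximal realizable sub-Gauss-diagram would fail if ``$K$ is classical'' were read at the level of knots rather than diagrams (a non-realizable diagram of the unknot represents a classical knot yet shares no Gauss diagram with any realizable $K^*<K$), so you should state explicitly which reading you are using.
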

\begin{corollary}(Manturov)\label{cor:Man}
Let $K$ be a classical knot. Then the minimal number of classical crossings for virtual diagrams in the virtual equivalence class of $K$ is realized in classical diagrams (up to the detour move). For every non-classical diagram (that is, the genus of the associated abstract knot diagram surface is not zero) in the class of $K$, the number of classical crossings is strictly greater than the minimal number of classical crossings.
\end{corollary}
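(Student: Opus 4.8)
\emph{Proof proposal.} The plan is to derive the corollary from Theorem~\ref{thm:Man} together with the functoriality statement in Theorem~\ref{thm:Ni}, the only additional input being that the parity projection is trivial on classical diagrams. First I would record the elementary fact that the Gauss code of any classical knot diagram is evenly intersticed --- every chord of its Gauss diagram interlaces an even number of chords --- so a classical knot diagram has \emph{no} odd crossings; this is the closed-curve analogue of the evenly-intersticed criterion for knot-type knotoid diagrams recalled in Section~\ref{sec:parity}. Since the parity projection of Theorems~\ref{thm:Ni} and \ref{thm:Man} is built by (iteratively) virtualizing odd crossings, it follows that if $D_0$ is a classical diagram then its projection $D_0^{*}$ equals $D_0$; in particular $pr$ restricts to the identity on classical diagrams.

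Now let $K$ be a classical knot and let $D$ be an arbitrary virtual diagram in its virtual equivalence class, with $n(D)$ classical crossings, and fix a classical diagram $D_0$ of $K$. By Theorem~\ref{thm:Man} there is a classical knot diagram $D^{*}$ with $D^{*}<D$, and $D^{*}$ has the same Gauss diagram as $D$ if and only if $D$ is classical. Applying the functoriality of the parity projection (Theorem~\ref{thm:Ni}, plus the observation that detour moves do not change the Gauss diagram and are therefore invisible to the projection) along a sequence of Reidemeister and detour moves from $D$ to $D_0$, one obtains that $D^{*}$ is related by Reidemeister moves to $D_0^{*}=D_0$; hence $D^{*}$ is a \emph{classical} diagram of $K$. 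Since $D^{*}<D$ means the chord set of $D^{*}$ is contained in that of $D$, we get $n(D^{*})\le n(D)$. Taking $D$ to be a crossing-minimal virtual representative of $K$ then shows that the minimal number of classical crossings over all virtual diagrams in the class of $K$ is attained by a classical diagram, which is the first assertion. If moreover $D$ is non-classical, then $D$ does not share its Gauss diagram with the classical diagram $D^{*}$, so the containment of chord sets is strict and $n(D^{*})<n(D)$; as $n(D^{*})$ is at least the classical crossing number of $K$, the number of classical crossings of $D$ strictly exceeds that minimum, giving the second assertion.

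The step I expect to require the most care is the functoriality claim --- that the parity projection of a diagram of $K$ is again a diagram of $K$. Theorem~\ref{thm:Ni} supplies this across a \emph{single} Reidemeister move, so one has to chain it along an arbitrary sequence of Reidemeister and detour moves and verify that the projection is defined at each stage; this is precisely the place where the restriction to minimal-genus diagrams in Theorem~\ref{thm:Ni}, or equivalently the diagram-level projection packaged in Theorem~\ref{thm:Man}, must be invoked with some care, after which it is combined with the base computation $D_0^{*}=D_0$. A minor supporting point is the bookkeeping that ``$<$'' between Gauss diagrams translates exactly into ``$\le$'' between numbers of classical crossings, with equality precisely when the Gauss diagrams coincide; it is this that upgrades the ``if and only if'' of Theorem~\ref{thm:Man} into the strict inequality in the second assertion.
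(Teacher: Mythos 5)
The paper offers no proof of this corollary: it is stated as a quotation of Manturov's result from \cite{Ma}, placed immediately after Theorems \ref{thm:Ni} and \ref{thm:Man}, and the authors rely on the citation rather than on a written derivation. Your proposal supplies the derivation that the juxtaposition of those two theorems is evidently meant to suggest, and it is sound in outline: classical diagrams are evenly intersticed, hence have no odd crossings and are fixed by the parity projection; functoriality under Reidemeister moves, together with the fact that detour moves are invisible at the Gauss-diagram level, shows that the projected diagram $D^{*}$ is a classical diagram of the same knot $K$; the relation $D^{*}<D$ gives $n(D^{*})\le n(D)$, with strict inequality exactly when $D$ is non-classical by the ``if and only if'' clause of Theorem \ref{thm:Man}. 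This is essentially Manturov's own argument.

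The one place where your write-up leans on something the quoted statements do not literally provide --- and which you correctly flag as the delicate step --- is the identification of the assignment $K\mapsto K^{*}$ of Theorem \ref{thm:Man} with the functorial map $pr$ of Theorem \ref{thm:Ni}. As printed, the former is asserted for every virtual diagram but carries no functoriality, while the latter is functorial but stated only on minimal genus diagrams, and an arbitrary virtual diagram of a classical knot need not be of minimal genus. To chain the projection along a sequence of Reidemeister and detour moves from $D$ to $D_{0}$ you need a single map that is defined on every diagram in the class of $K$, restricts to the identity on classical diagrams, and commutes with Reidemeister moves (with the images allowed to coincide when the move involves only odd crossings). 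Manturov's iterated parity projection has all of these properties for diagrams of classical knots, but that package has to be invoked as such rather than read off from the two statements as the paper phrases them. With that point made explicit, your argument is complete and is the intended one.
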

%\begin{theorem}
%Let $K$ be a virtual knot diagram, whose underlying diagram genus is not minimal in the class of the knot $K$. Then there exists a diagram obtained by deleting the ,
%\end{theorem}

\begin{theorem}\label{thm:conjecture}
Minimal diagrams of non-trivial knot-type knotoids have zero height.
\end{theorem}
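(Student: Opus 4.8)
The plan is to push the minimal diagram through the virtual closure and then let Manturov's crossing–minimality criterion (Corollary \ref{cor:Man}), which rests on the Nikonov–Manturov parity projection of Theorems \ref{thm:Ni}--\ref{thm:Man}, do the work. Fix a non-trivial knot-type knotoid $\kappa$ and a diagram $D$ of $\kappa$ with the least possible number $n$ of crossings. Since $\kappa$ is of knot-type it equals $\alpha(\widehat{\kappa})$ for a unique classical knot $\widehat{\kappa}$, and $\overline{v}(\kappa)=\widehat{\kappa}$ as a virtual knot. The closure diagram $\overline{v}(D)$ is then a virtual diagram representing $\widehat{\kappa}$ whose classical crossings are exactly the crossings of $D$, so $\overline{v}(D)$ has $n$ classical crossings and only virtual crossings along the closure arc.

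First I would pin down $n$. On one hand $\overline{v}(D)$ is a virtual diagram of the classical knot $\widehat{\kappa}$ with $n$ classical crossings, so Corollary \ref{cor:Man} gives $c(\widehat{\kappa})\le n$, where $c$ denotes the classical crossing number. On the other hand, deleting a short crossing-free sub-arc of a minimal diagram of $\widehat{\kappa}$ produces a knot-type knotoid diagram which, by definition of $\alpha$, represents $\alpha(\widehat{\kappa})=\kappa$ and has only $c(\widehat{\kappa})$ crossings; minimality of $D$ then gives $n\le c(\widehat{\kappa})$. Hence $n=c(\widehat{\kappa})$ --- which is also the promised equality between the crossing number of a knot-type knotoid and that of its closure.

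Next, $\overline{v}(D)$ is a virtual diagram of the classical knot $\widehat{\kappa}$ attaining the minimal number $c(\widehat{\kappa})$ of classical crossings, so the second assertion of Corollary \ref{cor:Man} forces $\overline{v}(D)$ to be a classical diagram, i.e. the genus of its abstract knot diagram surface is $0$. It remains to convert this into a statement about $D$. The abstract knot diagram surface of $\overline{v}(D)$ is obtained from the planar ribbon neighbourhood $N$ of $D$ in $S^2$ by attaching a single band $\beta$ along the closure arc (the virtual crossings contribute nothing, since at a virtual crossing the two strands lie on disjoint sheets). An Euler-characteristic count shows that $N\cup\beta$ has genus $0$ precisely when $\beta$ joins two points of a single boundary circle of $N$, equivalently precisely when the head and tail of $D$ are incident to a common region of $S^2\setminus D$ --- which is exactly the condition that $D$ has height $0$. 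Equivalently, genus $0$ of $\overline{v}(D)$ means, by the abstract knot diagram formalism behind Theorem \ref{thm:vkt}, that $\overline{v}(D)$ is carried to a classical diagram by detour moves and planar isotopy alone; since all virtual crossings of $\overline{v}(D)$ lie on the closure arc and detour moves merely reroute that arc, some closure arc of a planar-isotopic copy of $D$ is disjoint from it, again forcing height $0$. Either way $D$ has height $0$, as claimed.

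The main obstacle is this last step. One must be confident that the crossing-count hypothesis of Corollary \ref{cor:Man} genuinely forces the genus down (this is where Theorems \ref{thm:Ni}--\ref{thm:Man} are essential), and then carefully identify ``genus zero of the virtual closure of the diagram'' with ``the closure arc is inessential for that diagram,'' i.e. with height zero of the diagram; this translation should be carried out via the ribbon-surface picture above, and it is consistent with the class-level Theorem \ref{thm:Ko} of Korablev and May. The remaining ingredients --- the behaviour of $\overline{v}$ on diagrams, the correspondence $\kappa\leftrightarrow\widehat{\kappa}$, and the two crossing-number inequalities --- are routine.
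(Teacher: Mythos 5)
Your proof is correct and follows essentially the same route as the paper's: pass to the virtual closure, invoke the Nikonov--Manturov minimality result (Corollary \ref{cor:Man}), and translate the genus of the abstract knot diagram surface of $\overline{v}(D)$ into the position of the endpoints of $D$. The only difference is presentational --- you argue directly (first pinning down $n=c(\widehat{\kappa})$, then forcing genus zero), whereas the paper argues by contradiction from a positive-height minimal diagram; as a bonus your arrangement yields the crossing-number equality $n=c(\widehat{\kappa})$ en route, which the paper states as a separate subsequent theorem.
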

\begin{proof}
In the discussion, a minimal diagram refers to a diagram (in classical, knotoid and virtual categories) with minimal number of classical crossings.

 Let $k$ be a knot-type knotoid and assume to the contrary that there is a minimal diagram of $k$, $K_1$ that has a non-zero height. Let $n$ be the number of classical crossings of $K_1$. Then the virtual closure of $K_1$, $\overline{v}(K_1)$ is a virtual knot diagram with $n$ classical crossings and a number of virtual crossings that is at least equal to the height of $K_1$.
 
Since $k$ is a knot-type knotoid, the virtual closure of $k$, $\overline{v}(k)$ is a classical knot, and the virtual knot diagram $\overline{v}(K_1)$ lies in the virtual equivalence class of $\overline{v}(k)$ since the virtual closure map $\overline{v}$ is a well-defined map on the set of knotoids in $S^2$. 

The genus of the closed connected orientable surface $F(K_1)$ obtained by attaching $2$-disks to the boundary components of the abstract knotoid diagram surface associated to $K_1$ can be found by the following formula \cite{GK1}
\begin{center}
$g(F(K_1)) = 1 + \frac{((n-1) - \delta)}{2}$,
\end{center}
where $n$ is the crossing number of $K_1$ and $\delta$ is the number of boundary components of the abstract knotoid diagram. The genus of $F(K_1)$ is zero since $K_1$ has only classical crossings. That is, we have, % that is obtained by attaching bands to the arcs of $K_1$, has genus $0$ since $K_1$ is a planar diagram. In fact we have,
\begin{equation}\label{eqn:genus}
%\hspace{7cm}
%\centering
0 =  1 + \frac{((n-1) - \delta)}{2}.
\end{equation} 
%and $g(K_1)$ holds for the genus of the resulting abstract knotoid diagram.
It is clear that the number of boundary components of the associated abstract knotoid diagram is equal to the the number of planar regions determined by the diagram $K_1$, and the boundary components that are adjacent to the endpoints of $K_1$, are distinct since the endpoints of $K_1$ lie in different regions. For this reason, connecting the endpoints of $K_1$ virtually does not change the number of classical crossings but reduces the number of boundary components by $1$. That is, the number of the boundary components of the abstract knot diagram associated to $\overline{v}(K_1)$ is equal to $\delta - 1$. It is known that the genus of the closed connected orientable surface $F(\overline{v}(K_1))$ obtained by attaching $2$-disks to the boundary components of the abstract knot diagram surface associated with $\overline{v}(K_1)$ can be found by the following formula,
$$g(F(\overline{v}(K_1)))~ = ~ 1 + \frac{(n - (\delta - 1))}{2}.$$
  % since the two different boundary components become the same component by the connection.
 From Equation \ref{eqn:genus} we have,
 $$g(F(\overline{v}(K_1)))= g(F(K_1)) + 1 = 1.$$
 %That is, the virtual knot diagram $\overline{v}(K_1)$ lies in a torus.

Then by Theorem \ref{thm:Ni}, any minimal diagram of $\overline{v}(k)$ has strictly less than $n$ classical crossings. Let $\overline{K}$ be any minimal diagram of $\overline{v}(k)$, and $\overline{K}$ has $m$ classical crossings. We have,% The knot $\overline{v}(k)$ is a classical knot then by Corollary \ref{cor:Man} $\overline{K}$ is necessarily a classical knot diagram since it is a minimal diagram for $\overline{v}(K)$. Also, the knot diagrams $\overline{K}$ and $\overline{v}(K_1)$ are virtually equivalent to each other because they both represent the knot $\overline{v}(k)$. The number of classical crossings in the virtual knot diagram $\overline{v}(K_1)$ is equal to $n$ since $K_1$ has $n$ crossings. The virtual knot diagram $\overline{v}(K_1)$ has underlying genus $1$ by the discussion above. Then from Theorem \ref{thm:Ni} we have,
              $$m < n.$$
 
It is clear that the image of $\overline{K}$ under the map $\alpha$ (recall Section \ref{sec:basics}) is a knot-type knotoid diagram with $m$ crossings. Let us denote this knot-type knotoid diagram with $\overline{K}^*$. The underpass closures of the knotoid diagrams $\overline{K}^*$ and $K_1$ are isotopic to the knot $\overline{v}(k)$. This implies that the knotoid diagrams $\overline{K}^*$ and $K_1$ are equivalent to each other since the underpass closure is a bijection map on the knot-type knotoids. We have assumed that $K_1$ is a minimal diagram of $k$. Therefore we have,
$$n \leq m,$$ 
which contradicts with the above inequality. Therefore the assumption is wrong and the theorem follows.
\end{proof}

It is clear that the crossing number of a knot-type knotoid can be at most as the crossing number of the classical knot associated by the map $\alpha$. Turaev asked \cite{Tu} whether the crossing numbers are actually equal. From Theorem \ref{thm:conjecture} we can deduce the following.% implies that a minimal diagram of a knot-type knotoid can be obtained from a minimal diagram of the associated knot via the map $\alpha$ and so it enables us to deduce the following. 
\begin{theorem}
The crossing number of a knot $\kappa$ in $S^3$ is equal to the crossing number of the knot-type knotoid $\kappa^*$ obtained from $\kappa$ via the map $\alpha$. 
\end{theorem}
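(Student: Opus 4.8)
The plan is to establish the equality by proving the two inequalities $c(\kappa^{*}) \le c(\kappa)$ and $c(\kappa) \le c(\kappa^{*})$ separately, where $c(\cdot)$ denotes the minimal number of classical crossings over all diagrams in the relevant (knot or knotoid) class. The first inequality is the easy direction, essentially observed already in the sentence preceding the statement: starting from a crossing-minimal diagram $D$ of $\kappa$ in $S^2$, one deletes an open crossing-free arc of $D$ to obtain a knotoid diagram representing $\alpha(\kappa)=\kappa^{*}$ with exactly $c(\kappa)$ crossings. Hence $c(\kappa^{*})\le c(\kappa)$, and this needs no new input.

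For the reverse inequality I would first dispose of the trivial case: if $\kappa$ is the unknot then $\kappa^{*}$ is the trivial knotoid and both crossing numbers are $0$, so assume $\kappa$, and therefore $\kappa^{*}$, is non-trivial. Let $K$ be a knotoid diagram of $\kappa^{*}$ in $S^2$ realizing the crossing number $c(\kappa^{*})$. By Theorem \ref{thm:conjecture} this minimal diagram has height zero; by the definition of height, this means there is an arc joining the tail and the head of $K$ that is disjoint from the rest of the diagram, so the underpass closure of $K$ along that arc introduces no new crossings and yields a classical knot diagram $\widehat{K}$ with exactly $c(\kappa^{*})$ classical crossings.

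It then remains to verify that $\widehat{K}$ is a diagram of $\kappa$ itself. Here I would invoke the fact recalled in Section \ref{sec:basics} that the underpass closure restricts to a bijection on the set of knot-type knotoids which is inverse to $\alpha$; since $\kappa^{*}=\alpha(\kappa)$, the underpass closure of $\kappa^{*}$ is $\kappa$, so $\widehat{K}$ represents $\kappa$. This gives $c(\kappa)\le c(\kappa^{*})$, and combining with the first inequality yields $c(\kappa)=c(\kappa^{*})$.

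The real content of the argument is entirely carried by Theorem \ref{thm:conjecture}: it is what forbids a crossing-minimal knotoid diagram of $\kappa^{*}$ from "hiding" crossings that would be paid for only upon closure. So the main obstacle has in fact already been cleared, and what remains is bookkeeping; the one point that deserves care is making sure the height-zero conclusion is used in its precise geometric form (existence of a connecting arc meeting the diagram in no point) and that closing along \emph{that} arc returns $\kappa$ rather than some other knot obtained by closing along a bad arc.
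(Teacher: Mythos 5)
Your proposal is correct and follows essentially the same route as the paper: the paper's one-line proof simply cites Theorem \ref{thm:conjecture} to say that a minimal diagram of the knot-type knotoid arises from a minimal knot diagram via $\alpha$, and your two inequalities (with the height-zero conclusion feeding the underpass closure, which is the inverse of $\alpha$ on knot-type knotoids) are just the careful unpacking of that sentence.
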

\begin{proof}
Theorem \ref{thm:conjecture} implies that a minimal diagram of a knot-type knotoid can be obtained from a minimal diagram of the associated knot via the map $\alpha$.

\end{proof}

%\section{Discussion}

%\begin{acknowledgement}\normalfont
\noindent {\bf Acknowledgement.}
Kauffman's work was supported by the Laboratory of Topology and Dynamics, Novosibirsk State University (contract no. 14.Y26.31.0025 with the Ministry of Education and Science of the Russian Federation).
%\end{acknowledgement}

\end{document}